\newcommand{\ignore}[1]{}
\newcommand{\be}{\begin{equation}}
\newcommand{\ee}{\end{equation}}
\newtheorem{thm}{Theorem}[section]
\newtheorem{Lemma}[thm]{Lemma}
\newtheorem*{claim*}{Claim}
\theoremstyle{definition}
\theoremstyle{remark}
 \renewcommand\epsilon{\varepsilon}
\newcommand{\mb}{\mathbb}
\title{On the Number of components of random polynomial lemniscates}
\author{Subhajit Ghosh}
\address{Tata Institute of Fundamental Research, Centre for Applicable Mathematics, Bengaluru
560065, India}
\email{subhajitg@tifrbng.res.in}
\begin{document}

\begin{abstract}
A lemniscate of a complex polynomial $Q_n$ of degree $n$ is a sublevel set of its modulus, i.e., of the form $\{z \in \mb{C}: |Q_n(z)| <
t\}$ for some $t>0.$ In general, the number of connected components of this lemniscate can vary anywhere between 1 and $n$. In this paper, we study the expected number of connected components for two models of random lemniscates. First, we show that  lemniscates whose defining polynomial
has i.i.d. roots chosen uniformly from $\mb{D}$, has on average $\mathcal{O}(\sqrt{n})$ number of connected components. On the other hand if the i.i.d. roots are chosen uniformly from $\mb{S}^1$, we show
that the expected number of connected components, divided by n,
converges to $\frac{1}{2}$.
\end{abstract}

\maketitle

\section{Introduction}
Let $Q_n(z)$ be a monic polynomial of degree $n$ in the complex plane such that all its roots are contained within the closed unit disk $\overline{\mb{D}}$. That is, 
        \begin{align}\label{deterministic poly}
            Q_n(z):=\prod_{i=1}^n  (z-z_i),
        \end{align}
where $|z_j|\leq 1$, for $1\leq j\leq n$. We denote the unit lemniscate of $Q_n(z)$ by $\Lambda(Q_n):=\{z \in \mb{C} : |Q_n(z)| < 1\}.$ The quantity of interest is the number of connected components of $\Lambda$. The maximum principle implies that each connected component of the lemniscate must contain a zero of the polynomial; therefore, there are at most $n$ components.
In this paper, we investigate the number of components of a \emph{typical} lemniscate. Numerical simulations for random polynomials with roots chosen from the uniform probability measure on the unit disk $\mb{D}$, and on the circle $\mb{S}^1$ show a giant component alongside some tiny components (see Figures \ref{fig: comp in uni disk}, \ref{fig: comp in uni circ}). In this paper, we quantify this numerical observation.

\subsection{Motivation and Previous Results} The study of the metric and topological properties of polynomial lemniscates serves two main purposes. Firstly, it is the simplest curve with an algebraic boundary that is relevant to many problems in mathematical physics \cite{TopolofalgvarKKP,polylemtreesbraidsFM,lemofalgBF}. Secondly, polynomial lemniscates are used as a tool for approximating and analyzing complex geometric objects due to implications of Hilbert's lemniscate Theorem and its generalizations \cite{Ransford, SharpeningHLT}. For a more detailed exposition, please refer to \cite{KLM} and the corresponding references therein. Taking all these into account, in 1958, Erd\H{o}s, Herzog, and Piranian in \cite{metricEHP} studied the geometric and topological properties of polynomial lemniscates and posed a long list of open problems. One of the key motivations behind the work related to random polynomial lemniscates is to offer a probabilistic approach to the problems in \cite{metricEHP}. Krishnapur, Lundberg, and Ramachandran recently showed that the inradius of a random lemniscate whose defining polynomial
has roots chosen from a measure $\mu$ depends on the negative set of the logarithmic potential $U_\mu$. Lundberg, Epstein, and Hanin conducted a study on the lemniscate tree that encodes the nesting structure of the level sets of a random polynomial in \cite{treeEHL}. Lundberg and Ramachandran in \cite{arclengthLR} conducted a study on the \emph{Kac ensemble} and found that the expected number of connected components is asymptotically $n$. Lerario and Lundberg \cite{geometryLL} proved that for random rational lemniscates, which are defined as the quotient of two \emph{spherical random polynomials}, the average number of connected components is $\mathcal{O}(n)$. Later, Kabluchko and Wigman \cite{kabWig} discovered the exact asymptotics. Fyodorov, Lerario, and Lundberg studied the number of connected components of random algebraic
              hypersurfaces in \cite{compofgeohypersurfaceFLL}. In this article, we examine random polynomials with random roots, in contrast to random coefficients. Another stream of research on random polynomials includes studying the roots and critical points of random polynomials. In this work, we have made use of one such \emph{pairing} result due to Kabluchko and Seidel \cite{Kabluchko2019}, which states that for random polynomials whose roots are sampled from an appropriate probability measure $\nu$ supported within the unit disk, each root is associated with a critical point in close proximity. For more background, details and generalizations consult \cite{HaninPolynomial}, \cite{RourkNoah}, \cite{Criticalpointskabluchko}, \cite{SD}, \cite{PRzeroesofderivatives}, \cite{BLR}, \cite{michelen2022zeros}, \cite{almostsureAMP}, \cite{hoskins2021dynamics}, \cite{commonlimithu2017}, \cite{charpolyofmatrixRourke}. To find related research on meromorphic functions and Gaussian polynomials, please refer to \cite{Haninmeromorphic}, \cite{HaninGassianpoly}. We emphasize the fact that such pairing phenomena are exclusive to random polynomials. The analogous result in the deterministic setting is Sendov's conjecture \cite{Sendovconj}, which was recently proven by Tao in \cite{Tao} for all polynomials of sufficiently large degree.

\subsection{Main Results}In all the theorems we have the following setting.

\textbf{Setting and notations:} Let $\{X_i\}_{i=1}^\infty$ be a sequence of i.i.d. random variables with law $\mu$, supported in the closed unit disk. Consider the sequence of random polynomials defined by 
\begin{align}\label{01}
    P_n(z) :=\prod_{i=1}^n  (z-X_i)
\end{align}
and its lemniscate $$\Lambda_n:=\Lambda(P_n)=\{z \in \mb{C} : |P_n(z)| <1\}.$$ We denote by  $C(\Lambda_n)$ the number of connected components of the lemniscate $\Lambda_n$. Throughout the paper, we denote by C a positive numerical constant whose values may vary from line to line. For a set $S \subset \mb{C}$, we denote by $|S|$ the cardinality of the set $S$. The following are the main theorems of this paper. 


    \begin{figure}
    \centering
    \includegraphics[scale=.1725]{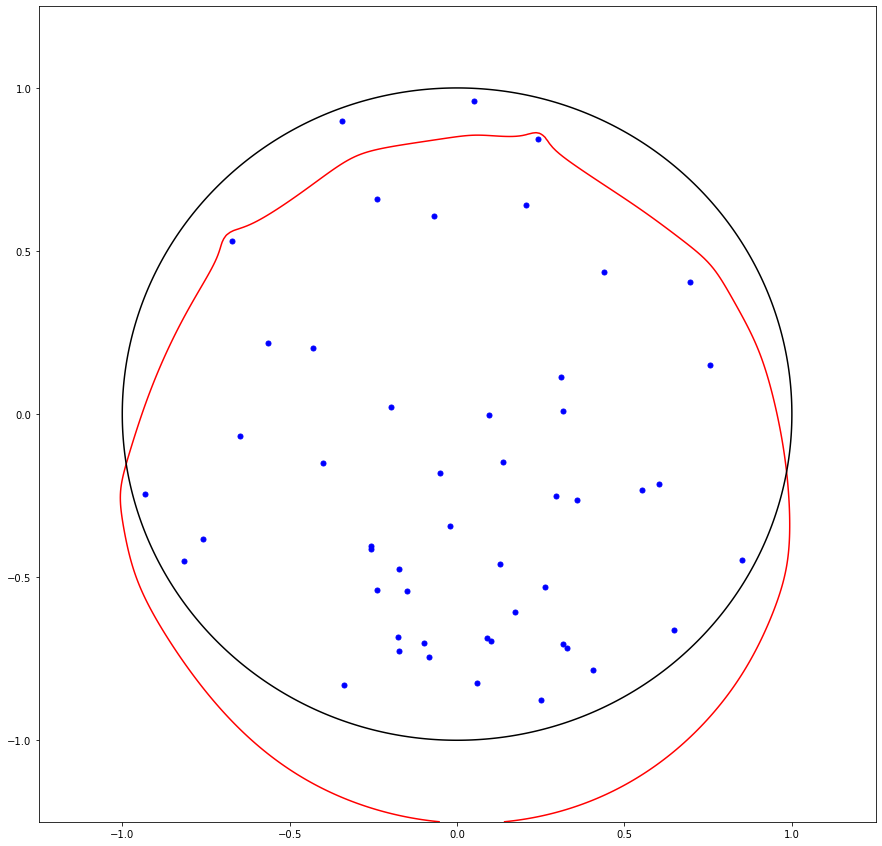}
    \includegraphics[scale=.1725]{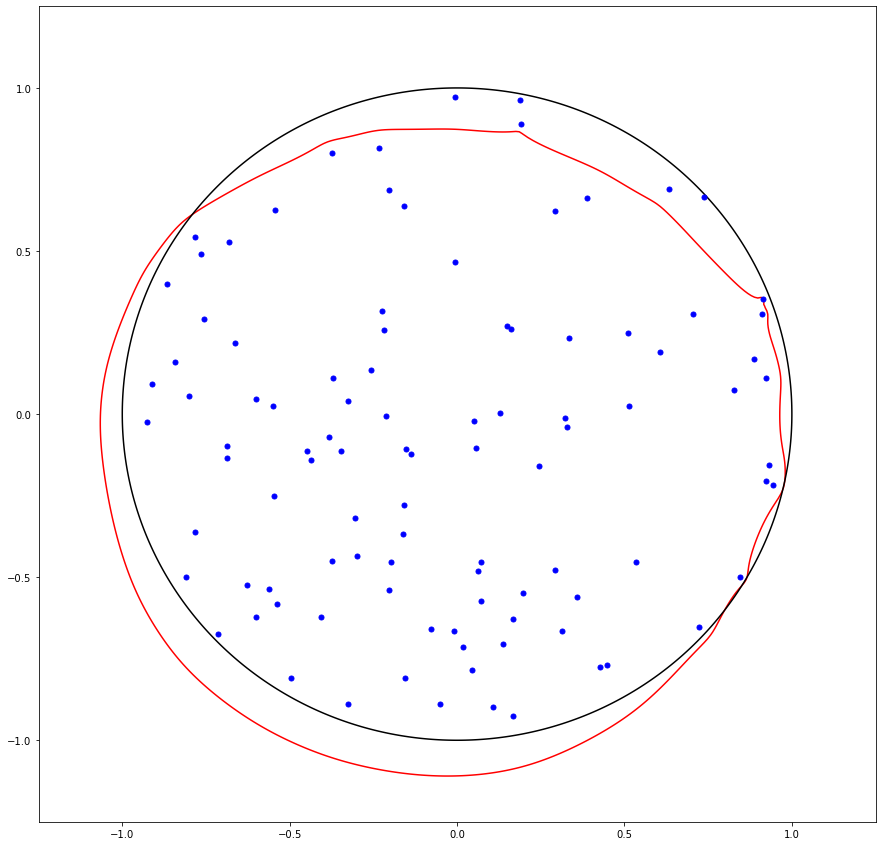}
    \includegraphics[scale=.1725]{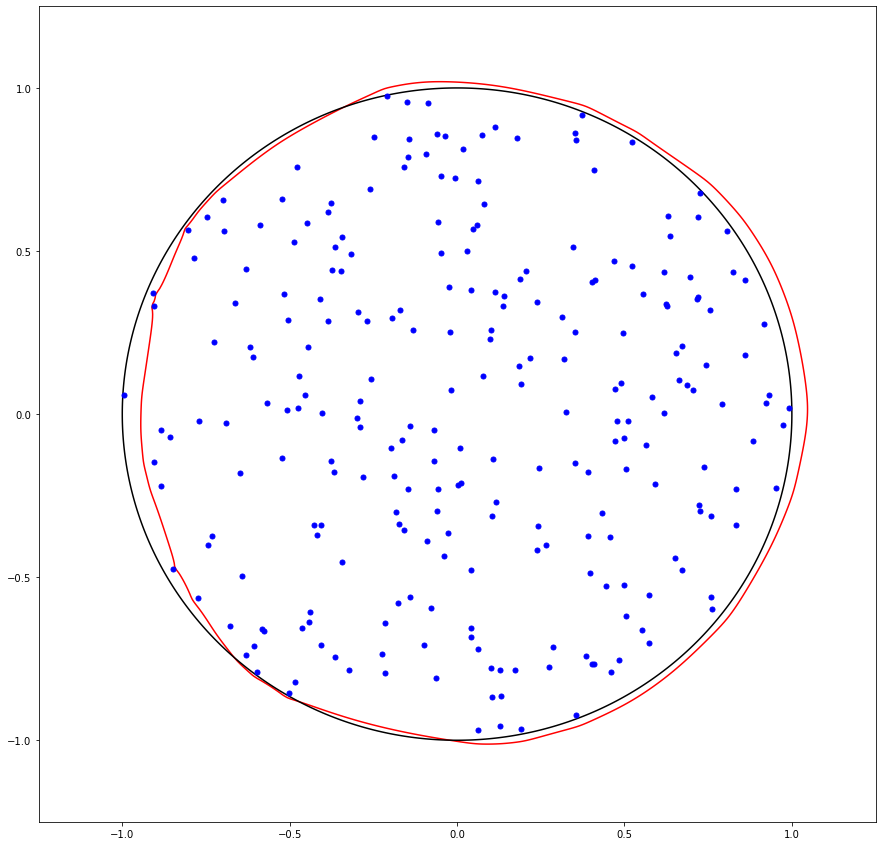}
    \caption{Lemniscates of degree n = 50, 100, 250 with zeros sampled uniformly from the open unit disk.}
    \label{fig: comp in uni disk}
    \end{figure}
    \begin{figure}
    \centering
    \includegraphics[scale=.1725]{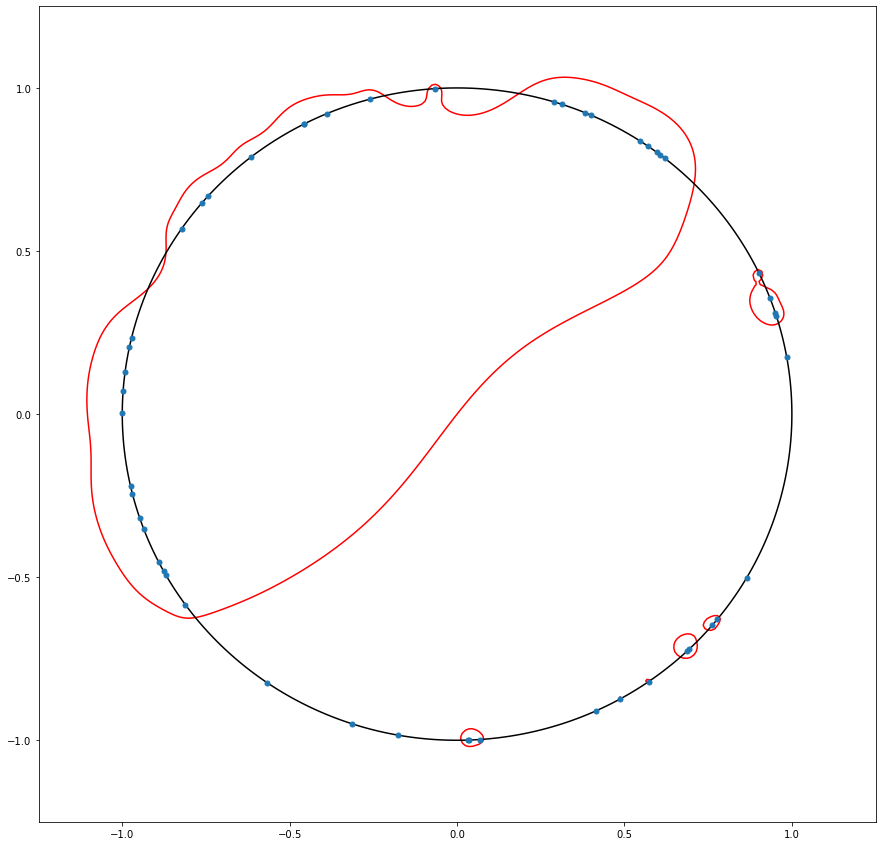}
    \includegraphics[scale=.1725]{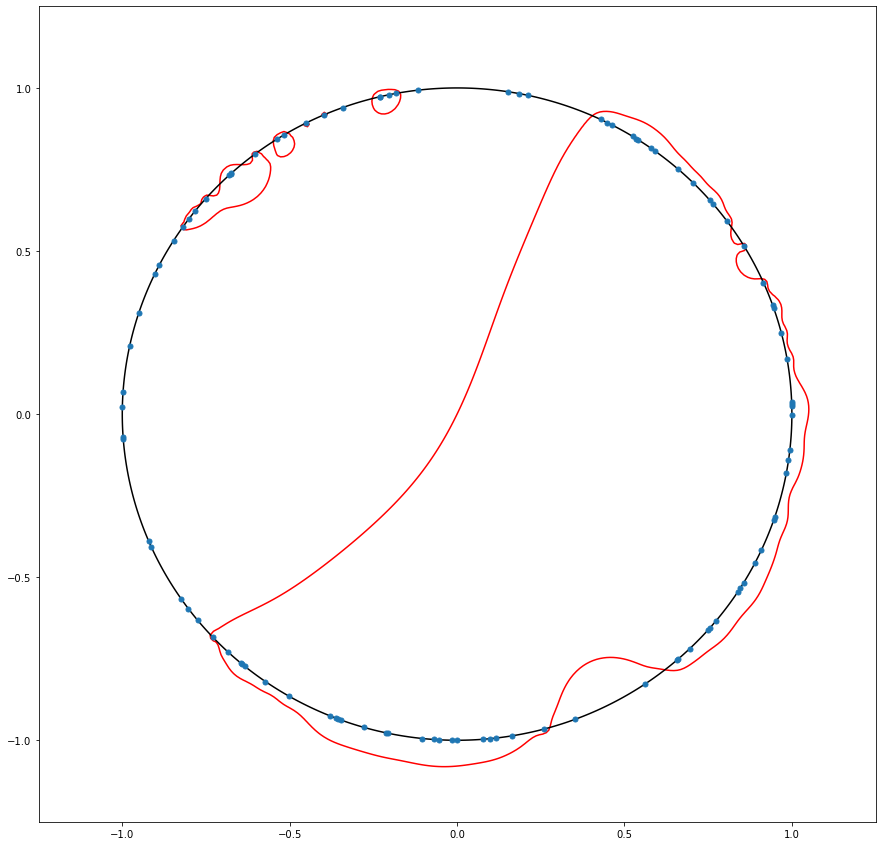}
    \includegraphics[scale=.1725]{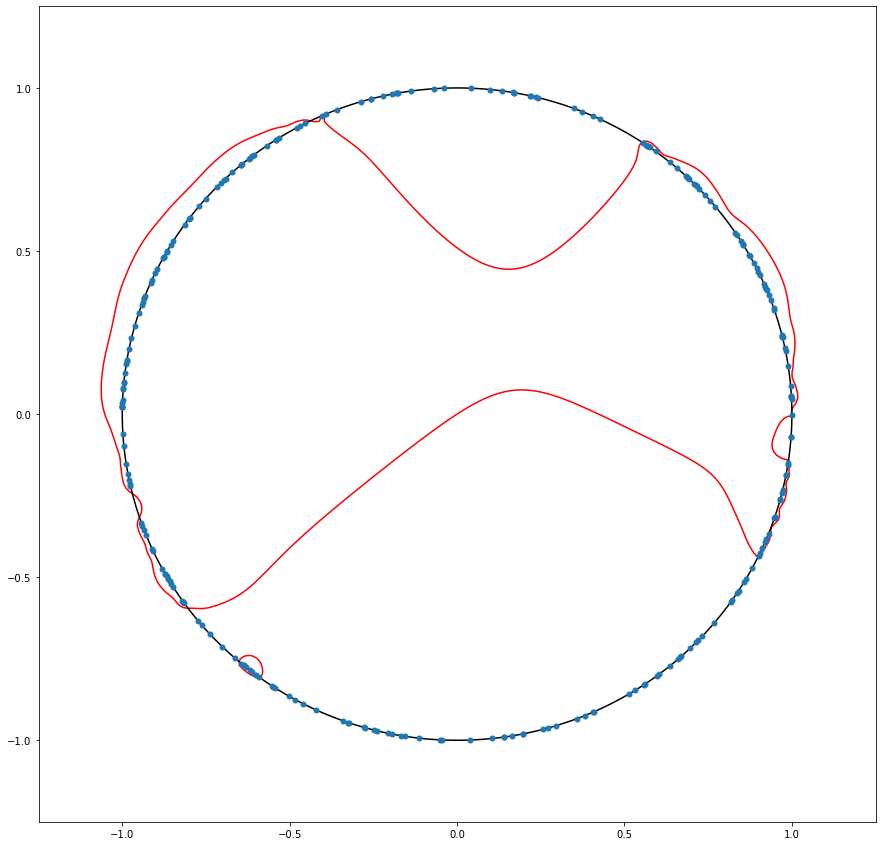}
    \caption{Lemniscates of degree n = 50, 100, 250 with zeros sampled uniformly from the unit circle. A unit circle is also plotted for reference in each case.}
    \label{fig: comp in uni circ}
    \end{figure}

        \begin{thm}\label{component uniform on disk}
             Let $\mu$ be the probability measure distributed uniformly in the unit disk $\mb{D}$. Then there exist absolute constants $C_1, C_2 > 0$ such that for all large $n$ we have 
            \begin{align*}
              C_1\sqrt{n}\leq {\mathbb{E}[C(\Lambda_n)]} \leq C_2\sqrt{n}.
            \end{align*}
        \end{thm}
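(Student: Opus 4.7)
The plan is to reduce the problem to counting critical points of $P_n$ that lie outside $\Lambda_n$, and to estimate this count by combining the Kabluchko--Seidel pairing between roots and critical points with concentration of $\log|P_n|$ around its deterministic limit $nU_\mu$, where $U_\mu(z)=\tfrac12(|z|^2-1)$ on $\overline{\D}$. I would first establish a deterministic identity: since $\log|P_n|$ is harmonic away from the zeros, the maximum principle forbids any bounded component of $\{|P_n|\geq 1\}$ (on such a component $\log|P_n|$ would be harmonic, nonnegative, and vanish on the boundary, hence identically zero, forcing $P_n$ to be constant). Thus $\Lambda_n$ has no holes, every component is simply connected, and Morse theory applied to $|P_n|^2$---whose Morse critical points a.s.\ consist of the $n$ zeros of $P_n$ (index $0$) and the $n-1$ roots of $P_n'$ (index $1$)---yields
\[
  C(\Lambda_n)=\chi(\Lambda_n)=n-s, \qquad s:=\#\{c: P_n'(c)=0,\ |P_n(c)|<1\}.
\]
Equivalently, $C(\Lambda_n)=1+N_n$ with $N_n:=\#\{c: P_n'(c)=0,\ |P_n(c)|\geq 1\}$, so it suffices to show $\E[N_n]=\Theta(\sqrt{n})$.

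To estimate $\E[N_n]$ I would invoke the Kabluchko--Seidel pairing, which attaches to each root $X_i$ a critical point $c_i$ with $c_i-X_i\approx -1/S_i$, where $S_i:=\sum_{j\neq i}1/(X_i-X_j)$. A short Taylor expansion then gives $P_n(c_i)\approx P_n'(X_i)(c_i-X_i)\approx -P_n'(X_i)/S_i$, so
\[
  \log|P_n(c_i)|\approx \log|P_n'(X_i)|-\log|S_i|.
\]
Conditional on $X_i$, the sum $\log|P_n'(X_i)|=\sum_{j\neq i}\log|X_i-X_j|$ consists of $n-1$ i.i.d.\ terms with mean $U_\mu(X_i)=-\tfrac12(1-|X_i|^2)$ and bounded variance, so by a central-limit-type concentration it equals $nU_\mu(X_i)$ plus Gaussian-scale fluctuations of order $\sqrt{n}$; meanwhile $\log|S_i|=O(\log n)$ with high probability since $\E[S_i\mid X_i]=(n-1)\overline{X_i}$ is of order $n$. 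The event $|P_n(c_i)|\geq 1$ then requires the fluctuation to overcome the drift $-nU_\mu(X_i)=\tfrac n2(1-|X_i|^2)$. Integrating the resulting Gaussian tail against uniform measure on $\D$ via the boundary-layer substitution $|X_i|=1-t/\sqrt{n}$ shows that $\PP(|P_n(c_i)|\geq 1)=\Theta(1/\sqrt{n})$, and summing over $i$ gives $\E[N_n]=n\cdot\Theta(1/\sqrt{n})=\Theta(\sqrt{n})$, which is the desired two-sided bound.

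The main obstacle will be making the pairing quantitative and uniform, particularly inside the boundary layer $1-|X_i|\lesssim 1/\sqrt{n}$, where the pairing distance is no longer negligibly small compared to the spacing between roots and the first-order Taylor approximation becomes delicate; one has to control both $|c_i-X_i|$ and the remainder $P_n(c_i)+P_n'(X_i)/S_i$ uniformly there. A further technical point is that $1/(X_i-X_j)$ has infinite second moment against uniform measure on $\D$, so $S_i$ is heavy-tailed and no naive central limit theorem applies to $\log|S_i|$; this forces a truncation argument separating the dominant mean $(n-1)\overline{X_i}$ from the tail contributions. For the lower bound one must also verify that the paired critical points $c_i$ are a.s.\ distinct for distinct roots in the boundary annulus, so that the events $\{|P_n(c_i)|\geq 1\}$ contribute independent units to $N_n$ rather than being double-counted.
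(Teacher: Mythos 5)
Your reduction to counting critical points with critical value at least $1$ (your $N_n$) is exactly the paper's Lemma \ref{Components and critical points}, and your Morse-theoretic derivation of it is a valid alternative to the paper's argument via level curves; the heuristic that Gaussian fluctuations of size $\sqrt{n}$ must overcome the drift $\tfrac n2(1-|X_i|^2)$, so that only a boundary layer of width $n^{-1/2}$ contributes, is also the right one. The gap is in the step $N_n=\sum_i\mathbbm{1}\{|P_n(c_i)|\ge 1\}$. For the upper bound this identity requires every critical point with critical value $\ge 1$ to be the Kabluchko--Seidel partner of some root; but such critical points live precisely in the boundary annulus $1-|X_i|\lesssim n^{-1/2}$ (and near the origin, where the Cauchy transform $f(u)=\bar u$ vanishes), which are exactly the regimes where the pairing is not established. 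The pairing argument runs Rouch\'e's theorem on $\partial B(u,r_n)$ against the constant $f(u)$ and needs $|f(u)|$ to dominate both the fluctuation of $\tfrac1n P_n'/P_n$ and the singular term $\tfrac{1}{n(z-u)}$, plus root separation much larger than $r_n$ --- none of which is uniform up to the boundary. You name this as ``the main obstacle'' but do not resolve it, and without it $\sum_i\PP(|P_n(c_i)|\ge1)$ bounds a different quantity than $N_n$. A similar issue affects your lower bound: you need the $c_i$ for boundary-layer roots to exist, be genuine critical points, be distinct, and satisfy $\log|P_n(c_i)|\approx\log|P_n'(X_i)|-\log|S_i|$ with controlled error, all in the hardest regime.

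The paper sidesteps both difficulties. For the upper bound it never touches critical points near the boundary: it shows that a root in the bulk annulus $\mb{D}_n=\{3n^{-1/4}<|z|\le 1-n^{-1/2}\}$ is, with probability $1-O(n^{-1/2})$, a ``good'' root whose paired critical point lies inside a ball contained in $\Lambda_n$ and hence has critical value $<1$; then $N_n\le (n-1)-\#\{\text{good roots}\}$, and the deficit is $O(\sqrt n)$ simply because $\mu(\mb{D}_n^c)=O(n^{-1/2})$. For the lower bound it avoids critical points altogether and directly exhibits isolated components: for $X_1$ in the boundary annulus it shows $|P_n|\ge1$ on a circle of radius $n^{-6}$ around $X_1$ via a Taylor expansion controlled by a Berry--Esseen lower bound on $|P_n'(X_1)|$ and moment bounds on the higher derivatives. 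To salvage your unified framework you would need to either prove the pairing, with quantitative Taylor control, uniformly in the boundary layer, or reorganize the upper bound along the paper's lines so that only bulk roots are ever paired.
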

        \begin{thm}\label{uniform on circle}
              Let $\mu$ be the probability measure distributed  uniformly in the unit circle $\mb{S}^1$. Then
            \begin{align*}
              \lim_{n \to \infty}\frac{\mathbb{E}[C(\Lambda_n)]}{n} =  \frac{1}{2} .
            \end{align*}
        \end{thm}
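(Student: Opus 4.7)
\emph{Step 1 (Morse reduction).} Almost surely $P_n$ has $n$ distinct zeros and $n-1$ distinct non-degenerate critical points $w_1,\dots,w_{n-1}$ with distinct critical values, all different from $1$. As $t$ grows from $0^+$, the sublevel set $\{|P_n|<t\}$ evolves from $n$ isolated disks to a topological disk; each critical point of $P_n$ (with $P_n(w)\neq 0$) is a saddle of $|P_n|^2$, and since the Euler characteristic must decrease from $n$ to $1$ over the $n-1$ crossings, every crossing must merge two components (handle creation would leave $\chi<1$ at $t=\infty$). Hence, almost surely, $C(\Lambda_n) = n - \#\{i : |P_n(w_i)|<1\}$, so it suffices to show $\mb E\,\#\{i:|P_n(w_i)|<1\}/n \to 1/2$.

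\emph{Step 2 (Critical point near a root).} By Kabluchko--Seidel, each $w_i$ lies within $O(1/n)$ of a unique zero $X_{j(i)}$; for each $j$, the associated critical point $w_j$ satisfies $w_j = X_j+\epsilon_j$ with $\epsilon_j \approx -S_j^{-1}$, where $S_j := \sum_{k\neq j}1/(X_j-X_k)$. The pointwise identity $\operatorname{Re}(1/(1-e^{i\phi}))=1/2$ (for $\phi\in(0,2\pi)$) gives the deterministic bound $\operatorname{Re}(e^{-i\theta_j}S_j)=(n-1)/2$, hence $|S_j|\geq(n-1)/2$ and $|\epsilon_j|\leq 2/(n-1)$. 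A careful analysis of $P_n(w_j) = \epsilon_j\, P_n'(X_j)\prod_{k\neq j}(1+\epsilon_j/(X_j-X_k))$, separating out the nearest-root factor, yields $\log|P_n(w_j)| = \log|P_n'(X_j)| - \log n + \mc R_j$, where $\mc R_j = o_p(\sqrt n)$.

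\emph{Step 3 (CLT and conclusion).} By rotational symmetry, condition on $X_j=1$. Then $\log|P_n'(X_j)|=\sum_{k\neq j}\log|1-X_k|=\sum_k\log 2|\sin(\theta_k/2)|$ is a sum of i.i.d.\ random variables with mean $0$ (by Jensen's formula) and finite variance $\sigma^2 := \int_0^{2\pi}(\log 2|\sin(\theta/2)|)^2\,d\theta/(2\pi)$. By the CLT, $\log|P_n'(X_j)|/\sqrt n \Rightarrow \mc N(0,\sigma^2)$. Since $\log n = o(\sqrt n)$ and $\mc R_j = o_p(\sqrt n)$, the event $|P_n(w_j)|<1 \iff \log|P_n'(X_j)|<\log n-\mc R_j$ has probability converging to $\mb P(\mc N(0,\sigma^2)<0) = 1/2$. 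By exchangeability of the roots and linearity of expectation, $\mb E\,\#\{i:|P_n(w_i)|<1\} = (n-1)\,\mb P(|P_n(w_1)|<1) \sim n/2$, so $\mb E[C(\Lambda_n)]/n\to 1/2$.

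\emph{Main obstacle.} Rigorously establishing the remainder estimate $\mc R_j = o_p(\sqrt n)$ in Step 2 is the main hurdle. The factor $|1+\epsilon_j/(X_j-X_{k^*})|$ for the nearest root $X_{k^*}$ can be as large as a polynomial power of $n$, since the minimum gap between i.i.d.\ uniform points on $\mb S^1$ has order $n^{-2}$; combined with the heavy-tailed Cauchy imaginary part of $S_j$, proving that the cumulative logarithmic contribution of such near-root terms remains $o_p(\sqrt n)$ requires quantitative control on the small order statistics of the angular gaps.
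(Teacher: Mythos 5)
Your outline takes a genuinely different route from the paper's, and the paper deliberately avoids it: for the circle ensemble it states outright that the root/critical-point pairing machinery is unavailable because $\mu$ on $\mb{S}^1$ has no planar density (the Cauchy transform $\mathbb{E}[1/(z-X)]$ vanishes identically in $\mb{D}$ and is discontinuous across the circle, so the Rouch\'e argument of Lemma \ref{pairing} and the Kabluchko--Seidel moment lemmas it invokes do not apply at the relevant scale). Instead, the paper gets the lower bound by counting isolated components: since $\mathbb{E}[\log|z-X|]=0$ on the circle, Berry--Esseen gives $\mathbb{P}(|P_n'(X_1)|\geq e^{n^{1/2-\epsilon}})\to \tfrac12$, and Taylor control of the higher derivatives turns this into an isolated component around roughly half the roots. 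The upper bound is purely combinatorial: with probability $\to\tfrac12$ a ball of radius $2n^{-(1-\epsilon)}$ centered at $rX_1$, $r=1-n^{-(1-\epsilon)}$, lies inside $\Lambda_n$ (a second-moment/Chebyshev computation exploiting $\mathbb{E}[1/(z-X)]=0$) and, by Paley--Zygmund, contains at least $n^{\epsilon/2}$ roots; hence about $n/2$ roots are absorbed into ``large'' components, of which there are at most $n^{1-\epsilon/2}$.

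Within your scheme, Steps 1 and 3 are fine (Step 1 is the paper's Lemma \ref{Components and critical points}, and $o_p(\sqrt n)$ control of the remainder would indeed suffice by Slutsky, since $\log n=o(\sqrt n)$). The genuine gap is the one you flag yourself in Step 2: the statement $\mathcal{R}_j=o_p(\sqrt n)$ is asserted, not proved, and it is the crux. Two things are missing. First, the existence of an actual critical point at $X_j - S_j^{-1}+{}$error, with a quantitative error bound, must be re-established for the singular measure on $\mb{S}^1$; citing Kabluchko--Seidel generically does not cover this, which is precisely why the paper abandons the pairing here. Second, in the product $\prod_{k\neq j}(1+\epsilon_j/(X_j-X_k))$ the near-root factors are delicate: $|\epsilon_j|\asymp 1/|S_j|$ and the nearest gap $|X_j-X_{k^*}|$ can be as small as $n^{-2}$, and the two are strongly dependent (a very close neighbour is exactly what makes $|S_j|$ large), so one cannot simply bound the offending factor by a power of $n$ and the rest by a union bound; a quantitative analysis of the small angular gaps jointly with the heavy-tailed $\operatorname{Im}S_j$ is required. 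Until both points are supplied the argument is incomplete, whereas the paper's route sidesteps them entirely.
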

\subsection{Remarks}
What happens if we choose $\mu$ to be the uniform measure on $r\mathbb{D}$ or $r\mb{S}^1$? Let us consider the uniform probability measure on $r\mb{S}^1$ say $\mu_r$. Then it is easy to show that the logarithmic potential is
\begin{align}
    U_{\mu_r}(z)=
        \begin{cases}\label{potential}
            \log|z|  & \mbox{ if } |z| \geq r,\\
            \log r   & \mbox{ if } |z| < r.
        \end{cases}
\end{align}
\begin{description}
    \item[Case 1 $(r<1)$] In this case, the potential \eqref{potential} is negative in the whole unit disk. Therefore the set $r\mathbb{D}$ is enclosed within the lemniscate by Theorem 1.1 in \cite{KLM}, resulting in a single connected component with overwhelming probability.
    \item[Case 2 $(r>1)$] In this case, the potential \eqref{potential} is positive in the entire complex plane therefore we get with overwhelming probability, $n$ components for the lemniscate, by the implications of Theorem 1.3 of \cite{KLM}.
\end{description}
 So in some sense, $r=1$ is the critical case in this model. A similar analysis for the uniform probability measure on $r\mathbb{D}$ is done in \cite{KLM}, example-1.7. See Figure \ref{fig: comp in r circ}, \ref{fig: comp in r disk}. The above results and the results in this paper are summarized schematically in Table \ref{tab:components}.
 \begin{figure}
    
    \centering
    \includegraphics[scale=.25]{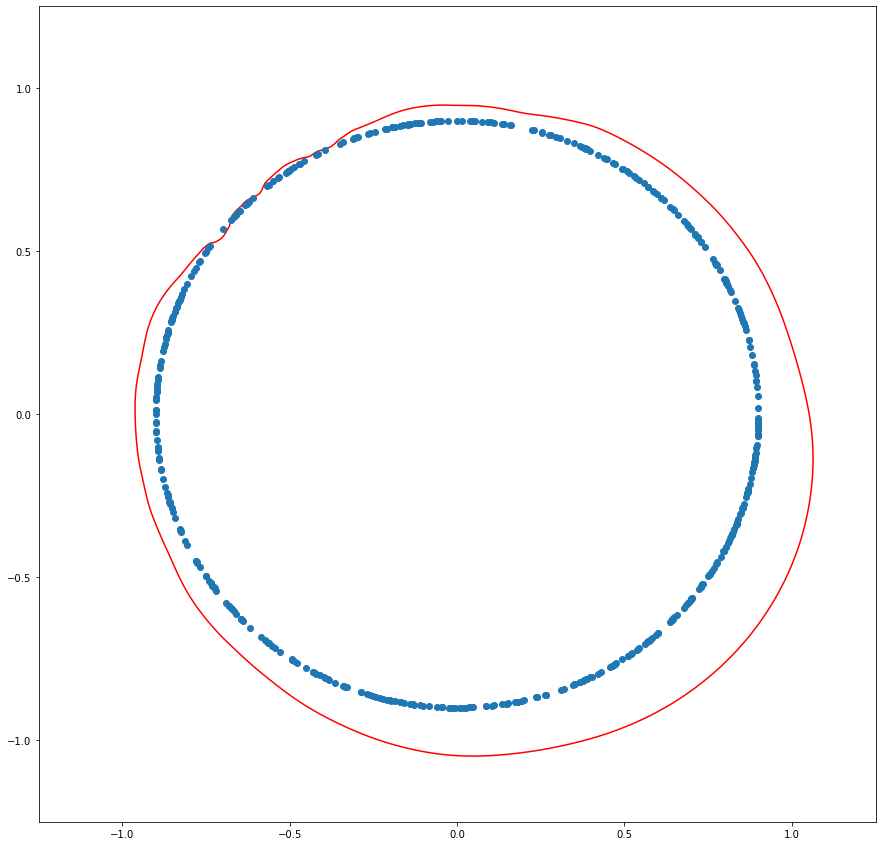}
    \includegraphics[scale=.25]{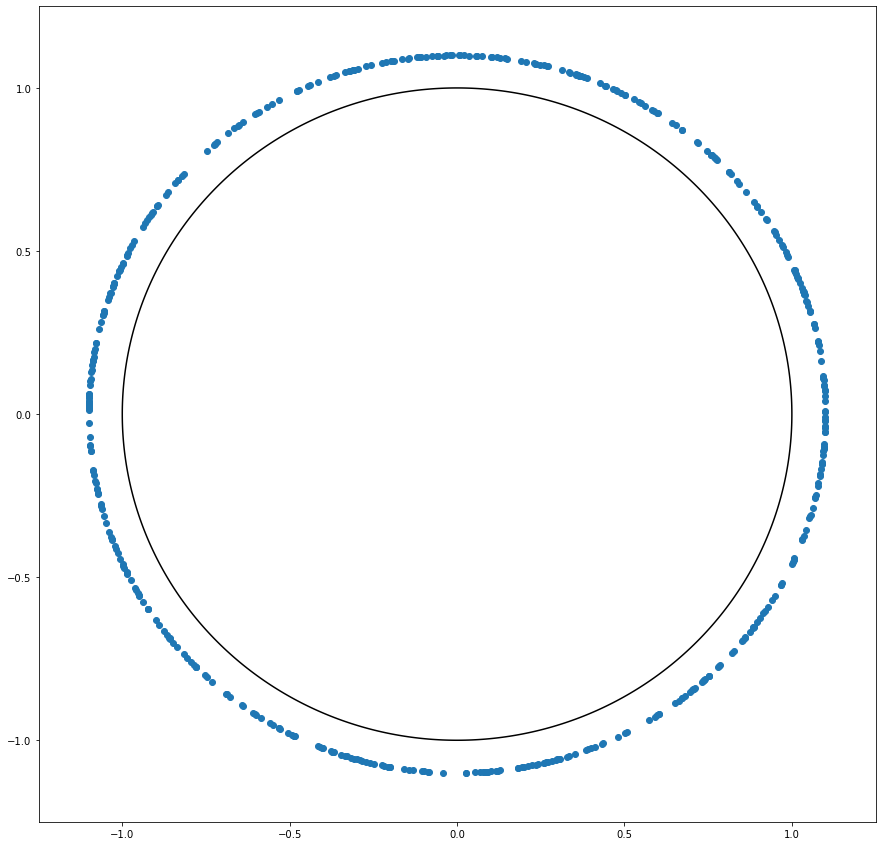}
    \caption{Lemniscates of degree n = 500 with zeros sampled uniformly from $r\mb{S}^1$, for $r=0.9$ and $1.1$ respectively.}\label{fig: comp in r circ}
    \end{figure}
    \begin{figure}
    \centering
    \includegraphics[scale=.1725]{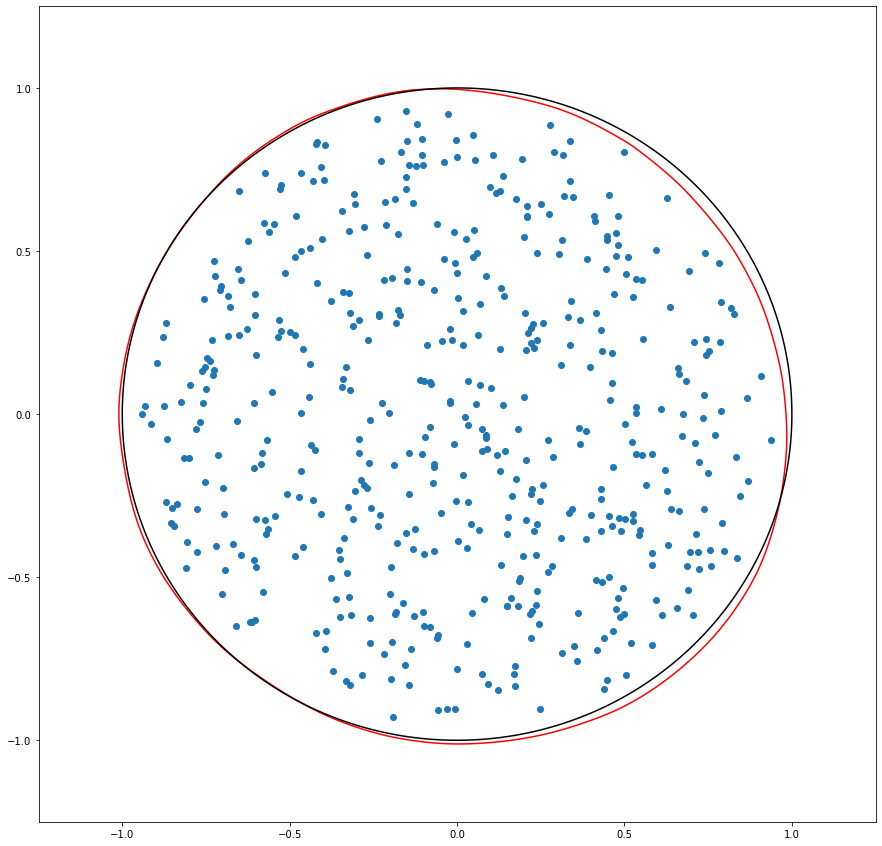}
    \includegraphics[scale=.1725]{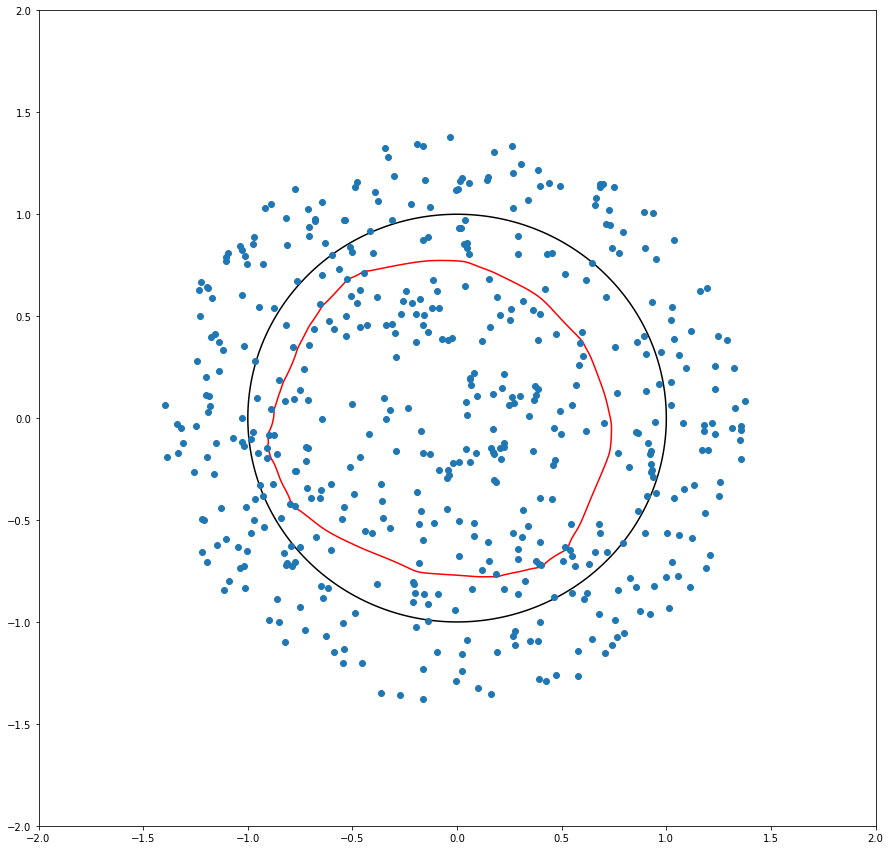}
    \includegraphics[scale=.1725]{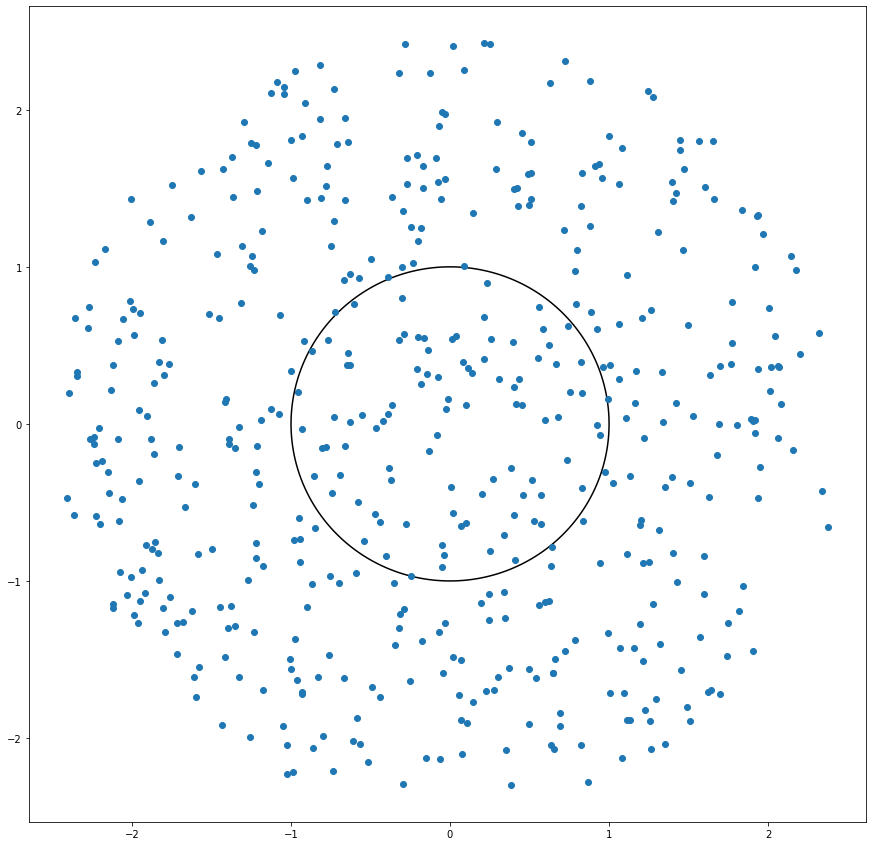}
    \caption{Lemniscates of degree n = 500 with zeros sampled uniformly from $r\mb{D}$ for $r=0.95, 0.85\sqrt{e},$ and $1.5\sqrt{e}$ respectively.}\label{fig: comp in r disk}
\end{figure}

\begin{table}[ht]
\centering
\begin{tabular}{||c|c|c|c|c|c||}
\hline
\textbf{} & $\boldsymbol{\mu}$ & $\mathbf{r < 1}$ & $\mathbf{r = 1}$ & $\mathbf{\sqrt{e} \geq r > 1}$ & $\mathbf{r > 1}$ \\
\hline
$\boldsymbol{\mathbb{E}[C(\Lambda_n)]}$ & Uniform probability measure in $r\mathbb{D}$ & $1$ & $\mathcal{O}(\sqrt{n})$ & $C_rn$ & $n$ \\
\hline
$\boldsymbol{\mathbb{E}[C(\Lambda_n)]}$ & Uniform probability measure in $r\mathbb{S}^1$ & $1$ & $\frac{n}{2}$ & $n$ & $n$ \\
\hline
\end{tabular}
\caption{Asymptotics of Expected No. of Components for Different Values of $r$.}
\label{tab:components}
\end{table}

\subsection{Heuristics and ideas of proof} We will now provide an overview of the underlying heuristics behind our results. In the first model, which involves random polynomials with uniformly chosen roots from $\mb{D}$, the potential $U_{\mu}(z)$ is negative throughout the unit disk. By writing $\log|P_n(z)|= \sum_{i=1}^n \log|z-X_i|$ as the sum of independent random variables with mean $U_{\mu}(z)$, we employ various concentration estimates to analyze the behavior of $|P_n(z)|$. Since the sum of i.i.d. random variables concentrate near its mean which is negative, most of the region within the disk, away from the boundary, lies inside the lemniscate. It is only near the boundary, where the potential approaches zero, isolated components are formed due to the fluctuations governed by the \emph{Central Limit Theorem}, resulting in $\mathcal{O}(\sqrt{n})$ many components. In the other model, i.e., random polynomial with roots chosen uniformly on the circle, the potential is zero in the whole disk. The probability of any point on $\mb{S}^1$ being inside the lemniscate is close to $\frac{1}{2}$. Therefore, if we start with $P_n$ and introduce a new root $X_{n+1}$ to build $P_{n+1},$ $X_{n+1}$ will land outside $\Lambda_n$ with probability approximately $\frac{1}{2}$, forming an isolated component. Therefore, on average, we get approximately $\frac{n}{2}$ components. In both models, we establish the lower bound by estimating the number of isolated components. To determine the upper bound for the disk case, we utilize an analytical characterization for the number of components (see Lemma \ref{Components and critical points}), which asserts that the number of components is one more than the number of critical points whose critical value is larger or equal to $1$. To determine the number of such critical points, we employ a \emph{pairing} result from \cite{Kabluchko2019} to associate critical points with roots with some desired properties. The number of such roots yields the desired upper bound. However, in the other case, the pairing phenomena does not occur. There we establish the upper bound by showing that the number of components possessing fewer than $n^{\epsilon}$ roots, when divided by $n$, tends towards $\frac{1}{2}$, for sufficiently small $\epsilon$.

\section{Preliminary Lemmas}
Before delving into the proofs of the main theorems, we gather preliminary theorems and lemmas that are utilized repeatedly in both theorems.
    \begin{thm}\textbf{(Berry-Esseen)}\label{BE}
        Let $X_1,X_2,...$ be i.i.d. random variables with $\mb{E}X_i=0,\mb{E}X_i^2=\sigma^2,$ and $\mb{E}|X_i|^3=\rho <\infty .$ If $F_n(x)$ is the distribution function of $\frac{(X_1+...+X_n)}{\sigma \sqrt{n}}$ and $\Phi(x)$ is the standard normal distribution, then
        \begin{align}
            |F_n(x)-\Phi(x)| \leq \frac{3\rho}{\sigma^3 \sqrt{n}}.
        \end{align}
    \end{thm}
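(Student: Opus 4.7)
The plan is to follow the classical Fourier-analytic proof of Berry--Esseen via characteristic functions and Esseen's smoothing inequality. First I would normalize by setting $Y_i := X_i/(\sigma\sqrt{n})$, so that $S_n := Y_1+\cdots+Y_n$ has mean zero, variance one, and third absolute moment $\rho/(\sigma^3 n^{3/2})$. The claim reduces to a uniform bound on $|F_n(x) - \Phi(x)|$, where $F_n$ is the distribution function of $S_n$. The key analytic ingredient is Esseen's smoothing lemma: for any distribution function $F$ with characteristic function $\phi_F$, and any $T > 0$,
$$\sup_x |F(x) - \Phi(x)| \leq \frac{1}{\pi}\int_{-T}^T \left|\frac{\phi_F(t) - e^{-t^2/2}}{t}\right| dt + \frac{C}{T},$$
where the constant $C$ depends only on $\sup_x \Phi'(x)$. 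This reduces pointwise control of the distribution functions to an $L^1$-type bound on the difference of characteristic functions on a bounded window.

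The next step is a Taylor expansion of the common characteristic function $\phi$ of each $Y_i$. Using the moment hypotheses, Taylor's theorem with integral remainder yields
$$\phi(t) = 1 - \frac{t^2}{2n} + R(t), \qquad |R(t)| \leq \frac{\rho \, |t|^3}{6\,\sigma^3 n^{3/2}}.$$
Then $\phi_{S_n}(t) = \phi(t)^n$. For $|t| \leq T$ with $T \asymp \sigma^3 \sqrt{n}/\rho$, one has $|\phi(t) - 1| < 1/2$, so the principal branch of $\log \phi(t)$ is defined. Expanding $n \log \phi(t) = -t^2/2 + O(\rho |t|^3/(\sigma^3 \sqrt{n}))$ and exponentiating, one obtains the pointwise bound
$$\bigl|\phi_{S_n}(t) - e^{-t^2/2}\bigr| \; \leq \; C \, \frac{\rho\, |t|^3}{\sigma^3 \sqrt{n}} \, e^{-t^2/4}$$
on the window $|t| \leq T$, where the Gaussian factor comes from dominating the exponential of the leading quadratic after absorbing the error.

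Substituting this estimate into Esseen's smoothing lemma, the integral contributes at most $C \rho/(\sigma^3 \sqrt{n})$ once the Gaussian factor is integrated against $|t|^2$, and the boundary term $C/T$ with $T \asymp \sigma^3\sqrt{n}/\rho$ contributes the same order, giving the stated $\mathcal{O}(\rho/(\sigma^3\sqrt{n}))$ rate. The main obstacle is not the rate but the explicit constants: Esseen's smoothing lemma itself requires a delicate Fourier inversion argument, and tracking numerical constants carefully enough to recover the clean value $3$ in the statement takes some bookkeeping (and the sharp Berry--Esseen constant remains a classical open problem). Since only the $n^{-1/2}$ rate enters the downstream arguments in this paper, the precise constant is not needed, and one may simply appeal to the standard proofs in the probability literature.
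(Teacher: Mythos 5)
Your outline is the standard Fourier-analytic proof via Esseen's smoothing inequality and a Taylor expansion of the characteristic function, which is exactly the argument in the reference the paper cites (Durrett, Theorem 3.4.17); the paper itself gives no proof and simply points to that source. Your sketch is correct in structure, and your remark that only the $n^{-1/2}$ rate (not the explicit constant $3$) matters for the downstream applications is accurate, so deferring the constant-tracking to the literature is entirely appropriate here.
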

    \noindent The proof of Theorem \ref{BE} can be found in \cite{durrett2019probability} Theorem 3.4.17.
    \begin{thm}\label{Ben} \textbf{[Bennett's inequality]} Let $Y_1,Y_2,...,Y_n$ be independent random variables with finite variance such that  $\forall$ $ i \leq n$, $Y_i \leq b$, for some $b > 0$ almost surely. Let
            \begin{align*}
                S=\sum_{i=1}^n\left(Y_i-\mathbb{E}[Y_i]\right),
            \end{align*}
        and $\nu = \sum_{i=1}^n\mathbb{E}[{Y_i}^2] .$ Then for any $t >0,$ we have
            \begin{align*}
                \mathbb{P}(S > t)\leq \exp\left(-\frac{\nu}{b^2}h\Big(\frac{bt}{\nu}\Big)\right),
            \end{align*}
        where $h(u) = (1 + u) \log(1 + u)- u \textit{, for } u > 0.$
    \end{thm}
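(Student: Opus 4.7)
The plan is to prove Bennett's inequality by the classical Cram\'er--Chernoff exponential moment method. For any $\lambda>0$, Markov's inequality applied to $e^{\lambda S}$, combined with the independence of the $Y_i$'s, gives
\begin{equation*}
    \mathbb{P}(S>t)\;\leq\;e^{-\lambda t}\,\mathbb{E}[e^{\lambda S}]\;=\;e^{-\lambda t}\prod_{i=1}^{n}\mathbb{E}\!\left[e^{\lambda(Y_i-\mathbb{E}[Y_i])}\right].
\end{equation*}
The task is then to produce a clean upper bound on each factor of this product and finally to optimise in $\lambda$.

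The key analytic input is the elementary observation that, for every fixed $\lambda>0$, the function $\varphi(x):=(e^{\lambda x}-1-\lambda x)/x^{2}$ (extended by continuity at $x=0$) is non-decreasing on $\mathbb{R}$. Since $Y_i\leq b$ almost surely, comparing $\varphi$ at $x=Y_i$ and $x=b$ yields the pointwise inequality
\begin{equation*}
    e^{\lambda Y_i}\;\leq\;1+\lambda Y_i+\frac{Y_i^{2}}{b^{2}}\bigl(e^{\lambda b}-1-\lambda b\bigr).
\end{equation*}
Taking expectations, using $1+u\leq e^{u}$ to exponentiate, then subtracting the mean and multiplying over $i$ produces the moment generating function bound
\begin{equation*}
    \mathbb{E}\!\left[e^{\lambda S}\right]\;\leq\;\exp\!\left(\tfrac{\nu}{b^{2}}\bigl(e^{\lambda b}-1-\lambda b\bigr)\right).
\end{equation*}
This step is precisely what lets only the second moment $\mathbb{E}[Y_i^{2}]$, and not the variance, appear on the right-hand side, matching the definition of $\nu$ in the statement.

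It remains to optimise in $\lambda>0$. Setting $f(\lambda):=-\lambda t+\tfrac{\nu}{b^{2}}(e^{\lambda b}-1-\lambda b)$, the first-order condition $f'(\lambda)=0$ reduces to $e^{\lambda b}=1+bt/\nu$, giving the explicit minimiser $\lambda^{\ast}=\tfrac{1}{b}\log(1+bt/\nu)>0$. Writing $u=bt/\nu$ and substituting $\lambda^{\ast}$ into $f$, a direct algebraic simplification collapses the exponent to $-\tfrac{\nu}{b^{2}}\bigl[(1+u)\log(1+u)-u\bigr]=-\tfrac{\nu}{b^{2}}h(bt/\nu)$, which is exactly the claimed bound. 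I do not anticipate any substantial conceptual obstacle; the only mildly delicate point is selecting the quadratic-in-$Y_i$ pointwise upper bound on $e^{\lambda Y_i}$ so that $\mathbb{E}[Y_i^{2}]$ (rather than $\mathrm{Var}(Y_i)$) appears on the right, which the monotonicity of $\varphi$ handles cleanly.
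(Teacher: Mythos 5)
Your proof is correct: the Chernoff bound, the monotonicity of $x\mapsto (e^{\lambda x}-1-\lambda x)/x^{2}$ to get the quadratic majorant under the one-sided bound $Y_i\leq b$, the resulting bound $\mathbb{E}[e^{\lambda S}]\leq\exp\bigl(\tfrac{\nu}{b^{2}}(e^{\lambda b}-1-\lambda b)\bigr)$ with $\nu=\sum_i\mathbb{E}[Y_i^{2}]$, and the optimisation $\lambda^{\ast}=\tfrac{1}{b}\log(1+bt/\nu)$ all check out. The paper does not prove this statement itself but defers to the standard reference, and your argument is precisely the classical proof given there, so there is nothing to compare beyond noting agreement.
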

    For the proof of this concentration inequality and other similar results see \cite{BLMbook}.
    \begin{Lemma}\label{log moment bound for uniform on cicle}
         Let $X$ be a random variable taking values in $\overline{\mb{D}}$ with law $\mu$. Assume that for all $z \in \mb{D}, r\leq 2$, there exist constants $ \epsilon,M_1,M_2 \in (0,\infty)$ such that $\mu$ satisfies 
         \begin{align}\label{lemma hypo}
             M_1r^{\epsilon}  \leq \mu(B(z,r))\leq M_2r^{\epsilon}.
         \end{align}
         Fix p, and define the function $F_p(z):= \mb{E}\Big[\big|\log|z-X|\big|^p\Big]:\mb{D} \to \mb{R}$. Then, there exist constants $C_1,C_2$ depending on $p,\epsilon,M_1,M_2$, such that
            \begin{align}\label{log moment ineq}
                C_1 \leq \inf_{z\in \mathbb{D}}F_p(z) \leq  \sup_{z\in \mathbb{D}}F_p(z) \leq  C_2.
            \end{align}
    \end{Lemma}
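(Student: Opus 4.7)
The plan is to reduce both bounds to the simple fact that $|\log r|$ is large only when $r$ is either very close to $0$ or very close to $\infty$, and the latter cannot happen because $\mu$ is supported in $\overline{\mathbb{D}}$ so $|z-X| \leq 2$ almost surely. The Ahlfors-type two-sided condition \eqref{lemma hypo} then controls exactly how much mass $\mu$ can put near the singularity $w=z$, from both sides.

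For the upper bound, I would write $F_p$ via the layer-cake formula
\begin{align*}
F_p(z) = \int_0^\infty p t^{p-1}\, \mathbb{P}\bigl(|\log|z-X|| > t\bigr)\, dt.
\end{align*}
Since $|z-X|\leq 2$, for $t \geq \log 2$ the event $\{|\log|z-X|| > t\}$ coincides with $\{|z-X| < e^{-t}\} = B(z,e^{-t})$, whose $\mu$-mass is at most $M_2 e^{-\epsilon t}$ by the upper half of \eqref{lemma hypo}. Splitting the integral at $\log 2$ and bounding the probability by $1$ on $[0,\log 2]$ gives
\begin{align*}
F_p(z)\;\leq\;(\log 2)^p \;+\; p M_2 \int_{\log 2}^\infty t^{p-1} e^{-\epsilon t}\, dt,
\end{align*}
a finite constant $C_2$ depending only on $p,\epsilon,M_2$. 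This is uniform in $z\in\mathbb{D}$.

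For the lower bound, I would use just the single scale $r=1/2$ in \eqref{lemma hypo}. On the event $\{X\in B(z,1/2)\}$ we have $|z-X|\leq 1/2$ and therefore $|\log|z-X||\geq \log 2$. Consequently,
\begin{align*}
F_p(z)\;\geq\;(\log 2)^p\, \mu\bigl(B(z,1/2)\bigr)\;\geq\;(\log 2)^p M_1\, 2^{-\epsilon}=:C_1,
\end{align*}
again uniform in $z\in\mathbb{D}$.

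There is no real obstacle; the only point that needs a moment of care is verifying that the integrability of $|\log|z-X||^p$ is actually guaranteed by the hypothesis (the upper bound $\mu(B(z,r))\leq M_2 r^\epsilon$ rules out any point mass at $z$ and forces polynomial decay of mass near $z$, which beats the logarithmic singularity to any power $p$). The cleanest way to record this is exactly the layer-cake estimate above, so both bounds fall out of essentially the same computation.
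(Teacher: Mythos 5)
Your proposal is correct and the upper bound is essentially the paper's own argument: the layer-cake representation, splitting the $t$-integral so that for large $t$ the event $\{|\log|z-X||>t\}$ reduces to $\{|z-X|<e^{-t}\}$ (since $|z-X|\leq 2$ rules out the positive tail), and then invoking $\mu(B(z,e^{-t}))\leq M_2 e^{-\epsilon t}$. For the lower bound the paper only says it "follows similarly" via the layer-cake formula and the left inequality of \eqref{lemma hypo}; your single-scale argument $F_p(z)\geq(\log 2)^p\,\mu(B(z,1/2))\geq(\log 2)^p M_1 2^{-\epsilon}$ is a clean and correct way to supply that omitted detail.
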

    \begin{proof}
            We will utilize the layer cake representation and write
                \begin{align*}
                        \mb{E}\left[\big|\log{|z-X|}\big|^p\right]&=p\int_0^\infty t^{p-1}\mb{P}\left(\big|\log{|z-X|}\big|>t\right) dt \\
                        &=p\int_0^{2}t^{p-1}\mb{P}\left(\big|\log{|z-X|}\big|>t\right)dt +p\int_{2}^\infty t^{p-1}\mb{P}\left(\big|\log{|z-X|}\big|>t\right)dt.
                \end{align*}
            In the second integral, notice that $(\log{|z-X|})^{+}<2$, therefore, probability is non zero when $\log{|z-X|}$ is negative. Taking this into account and using the upper bound in \eqref{lemma hypo}
                \begin{align*}
                   \mb{E}\left[\big|\log{|z-X|}\big|^p\right] &\leq p \int_0^{2} t^{p-1} d t + pM_2\int_{2}^\infty t^{p-1}e^{-t\epsilon}dt\\[.35em]
                    &\leq p2^{p+1}\left(1+C\left(\epsilon\right)M_2\right).
                \end{align*}
                   
            The lower bound follows similarly using the left inequality in \eqref{lemma hypo} along with the layer cake representation.
    \end{proof}
    \begin{Lemma}\label{moment bound for uniform on disk}
     Let $X$ be a uniform random variable on the open unit disk $\mathbb{D}$. For $ p<2 $, there exists a constant $C_p$ such that
    \begin{align}
            \mb{E}\left[\frac{1}{|z-X|^p}\right] \leq C_p.
        \end{align}
    \end{Lemma}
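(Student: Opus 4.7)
The plan is to reduce the expectation to a plane integral and then use polar coordinates around the singularity. Since $X$ has density $\frac{1}{\pi}\mathbbm{1}_{\mathbb{D}}$, I would write
\begin{align*}
    \mathbb{E}\!\left[\frac{1}{|z-X|^p}\right] = \frac{1}{\pi}\int_{\mathbb{D}} \frac{1}{|z-w|^p}\, dA(w).
\end{align*}
The singularity of the integrand is at $w=z$, so it is natural to translate: set $u = w - z$. The new domain $\mathbb{D}-z$ still lies in $\mathbb{C}$, and crucially, since $|z|<1$ and $|w|<1$ imply $|u|<2$, we have $\mathbb{D}-z \subset B(0,2)$ uniformly in $z$. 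Extending the integrand positively to all of $B(0,2)$ can only increase the integral, yielding a bound independent of $z$.

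Next I would pass to polar coordinates on $B(0,2)$, obtaining
\begin{align*}
    \mathbb{E}\!\left[\frac{1}{|z-X|^p}\right] \leq \frac{1}{\pi}\int_{B(0,2)} \frac{1}{|u|^p}\, dA(u) = 2\int_0^2 r^{1-p}\, dr.
\end{align*}
The last integral is elementary and converges precisely when $1-p > -1$, i.e., $p < 2$, giving an explicit constant $C_p = \frac{2^{3-p}}{2-p}$. This completes the proof and shows the bound holds uniformly for every $z \in \mathbb{D}$ (indeed for every $z \in \mathbb{C}$ with $|z|<1$, and more generally with a slightly worse constant for bounded $z$).

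There is no real obstacle here: the lemma is a direct consequence of the local integrability of $|u|^{-p}$ in the plane for $p<2$, and the boundedness of the support of $X$. The only point worth being careful about is verifying that the translation-and-enlargement step produces a bound independent of $z$, which is immediate from the inclusion $\mathbb{D}-z \subset B(0,2)$.
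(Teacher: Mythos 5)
Your proof is correct. You compute the expectation directly as an area integral, translate the singularity to the origin, enlarge the domain to $B(0,2)$, and evaluate in polar coordinates; every step is valid, the inclusion $\mathbb{D}-z\subset B(0,2)$ does give a bound uniform in $z$, and the resulting constant $C_p=\frac{2^{3-p}}{2-p}$ is finite exactly for $p<2$. The paper takes a slightly different route: it uses the layer cake representation, writing $\mathbb{E}\bigl[|z-X|^{-p}\bigr]=\int_0^\infty \mathbb{P}\bigl(|z-X|<t^{-1/p}\bigr)\,dt$, splitting the integral at $t=2$, bounding the first piece trivially by $2$ and the second via the ball-measure estimate $\mathbb{P}(|z-X|<s)\le s^2$, so that the tail integral $\int_2^\infty t^{-2/p}\,dt$ converges precisely when $p<2$. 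The two arguments are of course Fubini-related versions of the same computation, but they package it differently: the paper's tail-probability formulation is chosen to run in parallel with its Lemma 2.3 (the logarithmic moment bound, where layer cake is the natural tool) and only uses the scaling of $\mu(B(z,r))$, so it generalizes immediately to any measure with a comparable ball-growth condition; your direct polar-coordinate computation is more elementary and yields an explicit constant, at the cost of being tied to the uniform density. Either is acceptable.
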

    \begin{proof}
    This proof is again based on the layer cake representation.
        \begin{align*}
            \mb{E}\left[\frac{1}{|z-X|^p}\right]&=\bigintsss_0^\infty \mb{P}\left(\frac{1}{|z-X|^p}>t\right) dt \\
            &=\bigintsss_0^\infty \mb{P}\left(|z-X|<\frac{1}{t^{1/p}}\right) dt\\
            &=\bigintsss_0^{2}\mb{P}\left({|z-X|}<\frac{1}{t^{1/p}}\right)dt +\bigintsss_{2}^\infty \mb{P}\left({|z-X|}<\frac{1}{t^{1/p}}\right)dt \\
            &=\int_0^{2}  dt +\int_{2}^\infty t^{-2/p}dt\\
            &\leq \left( 2+\frac{p}{2-p}2^{\frac{p-2}{p}}\right).\qedhere
        \end{align*}
    \end{proof}
    \begin{Lemma}(\textbf{Distance between the roots})\label{distance between roots}
        Let $\{X_i\}_{i=1}^\infty$ be a sequence of i.i.d. random variables with law $\mu$, supported in the closed unit disk. If there exists a real-valued function $f$ such that $$\mathbb{P}\left( |z-X_j|>t \right)\geq 1-f(t), $$ for all $z \in \mb{D}$ and $t$ small, then for any set $B\subset \mb{D}$, we have
        \begin{align}
             &\mathbb{P}\left(  \min_{2\leq j \leq n}|X_1-X_j| >t  \Big| X_1 \in B\right) \geq \left(1-f(t)\right)^n.
        \end{align}
    \end{Lemma}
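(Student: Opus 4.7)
The plan is to reduce the joint statement to a pointwise one by conditioning on $X_1$, then use independence of the $X_j$'s and the given tail bound.

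First I would condition on $X_1 = x$ for an arbitrary $x \in B$. Since $X_1, X_2, \ldots$ are i.i.d., the random variables $X_2, \ldots, X_n$ are independent of $X_1$ and each still has law $\mu$. Therefore the conditional events $\{|x - X_j| > t\}$, for $j = 2, \ldots, n$, are mutually independent, and
\begin{align*}
\mathbb{P}\left(\min_{2 \leq j \leq n} |X_1 - X_j| > t \,\Big|\, X_1 = x\right) = \prod_{j=2}^{n} \mathbb{P}\bigl(|x - X_j| > t\bigr).
\end{align*}

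Next, since $x \in B \subset \mathbb{D}$, the hypothesis applies with $z = x$, giving $\mathbb{P}(|x - X_j| > t) \geq 1 - f(t)$ for each $j \geq 2$. Multiplying these $n-1$ inequalities yields
\begin{align*}
\mathbb{P}\left(\min_{2 \leq j \leq n} |X_1 - X_j| > t \,\Big|\, X_1 = x\right) \geq (1 - f(t))^{n-1} \geq (1 - f(t))^{n},
\end{align*}
where the last step uses $1 - f(t) \leq 1$ (which is automatic since $f(t) \geq 0$ in the relevant regime; otherwise the stated bound is vacuous).

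Finally, I would integrate this pointwise bound against the conditional distribution of $X_1$ given $\{X_1 \in B\}$, namely $\mu(\cdot \cap B)/\mu(B)$, to obtain
\begin{align*}
\mathbb{P}\left(\min_{2 \leq j \leq n}|X_1 - X_j| > t \,\Big|\, X_1 \in B\right) = \frac{1}{\mu(B)} \int_B \mathbb{P}\left(\min_{2 \leq j \leq n}|x - X_j| > t\right) d\mu(x) \geq (1 - f(t))^n,
\end{align*}
as desired. There is no real obstacle here; the lemma is essentially a packaging of independence plus a uniform (in $z \in \mathbb{D}$) tail hypothesis, and the only care needed is to justify the disintegration in the last step, which is standard since the $X_i$ are i.i.d. and $B$ is an arbitrary measurable subset of $\mathbb{D}$.
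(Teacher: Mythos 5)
Your proposal is correct and follows essentially the same route as the paper: condition on the location of $X_1$ in $B$, factor the probability using independence of $X_2,\ldots,X_n$, apply the uniform tail bound pointwise, and integrate back. The only (harmless) difference is that you explicitly record the step $(1-f(t))^{n-1}\geq(1-f(t))^n$, which the paper leaves implicit in its final display.
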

    \begin{proof}[ \textbf{Proof of Lemma \ref{distance between roots}}]
    We use the independence of the random variables after conditioning on $X_1$ to write
            \begin{align*}
                &\mathbb{P}\left(  \min_{2\leq j \leq n}|X_1-X_j| >t \Big|X_1 \in B\right)\\[.5em]
                &= \frac{1}{\mathbb{P}( X_1 \in B)}\bigintsss_{\mathbb{D}} \mathbb{P}\left(  \min_{2\leq j \leq n}|X_1-X_j| >t,X_1 \in B \Big|X_1=z \right) d\mu(z)\\[.75em]
                &=\frac{1}{\mathbb{P}( X_1 \in B)}\bigintsss_{B} \mathbb{P}\left(  \min_{2\leq j \leq n}|z-X_j| >t\right) d\mu(z)\\[.75em]
                &=\frac{1}{\mathbb{P}( X_1 \in B)}\bigintsss_{B} \mathbb{P}\left( |z-X_j|>t\right)^{(n-1)} d\mu(z)\\[.75em]
                &\geq \frac{1}{\mathbb{P}( X_1 \in B)}\bigintsss_{B} \left(1-f(t)\right)^{(n-1)} d\mu(z)\\[.75em]
                &\geq \left(1-f(t)\right)^{(n-1)}.\qedhere
            \end{align*}  
    \end{proof}
    \begin{Lemma}\textbf{(Lower bound on first derivative)}\label{derivative L bound}
          Let $\{X_i\}_{i=1}^\infty$ be a sequence of i.i.d. random variables with law $\mu$, supported in the closed unit disk. Assume that for every $1\leq p \leq 3$, there exists some positive constant $C_p> 0$, such that $E\left[\left|\log|z-X_1|\right|^p\right] <C_p.$ Let $B_n \subset \mb{D}$ be such that for some $M \geq 0$ and $\forall z \in B_n $, we have $ E\left[|\log|z-X_1|\right] \geq -\frac{M}{\sqrt{n}}$. Then for $n$ large, there exists a constant $\hat{C}(M)>0$, depending on $M$ such that,
          \begin{align}\label{l36}
              \mathbb{P}\left(\big|{P}^{'}_n(X_1)\big| \geq e^{\sqrt{n}} \Big| X_1 \in B_n\right) \geq \hat{C}(M).
          \end{align}
    \end{Lemma}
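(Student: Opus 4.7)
The plan is to condition on $X_1$ and reduce the statement to a pointwise Berry--Esseen estimate for the i.i.d.\ sum $\sum_{i=2}^n\log|X_1-X_i|$, which equals $\log|P'_n(X_1)|$.

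First I would observe that $P'_n(X_1)=\prod_{i=2}^n(X_1-X_i)$, so the event $|P'_n(X_1)|\geq e^{\sqrt{n}}$ is exactly $\sum_{i=2}^n\log|X_1-X_i|\geq\sqrt{n}$. By the law of total probability,
\begin{align*}
\mathbb{P}\bigl(|P'_n(X_1)|\geq e^{\sqrt{n}}\bigm| X_1\in B_n\bigr)=\frac{1}{\mathbb{P}(X_1\in B_n)}\int_{B_n}\mathbb{P}\!\left(\sum_{i=2}^n\log|z-X_i|\geq\sqrt{n}\right)d\mu(z),
\end{align*}
so it suffices to prove a pointwise lower bound $\mathbb{P}\bigl(\sum_{i=2}^n\log|z-X_i|\geq\sqrt{n}\bigr)\geq \hat{C}(M)$ uniformly for $z\in B_n$, and then integrate against $\mu$ restricted to $B_n$.

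Next I would apply Theorem~\ref{BE}. Fix $z\in B_n$ and set $m(z):=\mathbb{E}[\log|z-X_1|]$, $\sigma^2(z):=\mathrm{Var}(\log|z-X_1|)$. The hypothesis gives $m(z)\geq -M/\sqrt{n}$ and $\sigma^2(z)\leq C_2$, while the inequality $|Y-m|^3\leq 4(|Y|^3+|m|^3)$ together with the bounds on $C_1$ and $C_3$ yields a uniform bound $\rho(z)\leq C$ on the third absolute central moment. After centering and normalizing by $\sigma(z)\sqrt{n-1}$, the event rewrites as the standardized sum exceeding
\begin{align*}
t(z,n):=\frac{\sqrt{n}-(n-1)m(z)}{\sigma(z)\sqrt{n-1}}\leq\frac{(M+1)\sqrt{n/(n-1)}}{\sigma(z)},
\end{align*}
using $m(z)\geq -M/\sqrt{n}$. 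Provided $\sigma(z)$ is bounded below by some $\sigma_0>0$, $t(z,n)$ is $O(1)$ in $M$ alone, and Berry--Esseen gives
\begin{align*}
\mathbb{P}\!\left(\sum_{i=2}^n\log|z-X_i|\geq\sqrt{n}\right)\geq 1-\Phi\!\left(\tfrac{2(M+1)}{\sigma_0}\right)-\frac{3\rho}{\sigma_0^3\sqrt{n-1}},
\end{align*}
which is at least a positive constant $\hat{C}(M)$ once $n$ is large enough.

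The principal obstacle is the uniform lower bound $\sigma(z)\geq\sigma_0>0$ on $\mathrm{Var}(\log|z-X_1|)$: the stated moment hypothesis controls $\sigma(z)$ only from above, and some non-degeneracy is genuinely needed (in the degenerate case where $\log|z-X_1|$ is nearly deterministic and equal to its mean $\in[-M/\sqrt{n},o(1)]$, the conclusion fails). In the intended applications -- Theorem~\ref{component uniform on disk} with $\mu$ uniform on $\mathbb{D}$ and Theorem~\ref{uniform on circle} with $\mu$ uniform on $\mathbb{S}^1$, where $B_n$ will be a thin annular strip near $\partial\mathbb{D}$ -- this lower bound is verified by a direct computation of $\mathrm{Var}(\log|z-X_1|)$, which is manifestly bounded below uniformly in $z\in\mathbb{D}$. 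I expect this non-degeneracy check to be an implicit input (or to be verified at the call site of the lemma); modulo it, the remaining argument is the routine Berry--Esseen computation above followed by the integration over $B_n$ from the first paragraph.
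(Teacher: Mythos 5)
Your proposal matches the paper's proof essentially step for step: conditioning on $X_1=z$, rewriting the event as $\sum_{j\geq 2}\log|z-X_j|\geq\sqrt{n}$, absorbing the mean via $m(z)\geq -M/\sqrt{n}$ so that the Berry--Esseen threshold is at most $(M+1)/\sigma(z)$, and concluding from uniform moment bounds followed by integration over $B_n$. Your caveat about the uniform lower bound on $\sigma(z)$ is well taken: the paper simply asserts that ``from the hypothesis, we have uniform upper and lower bounds on $\sigma^2(z)$ and $\rho(z)$'' even though the stated hypothesis only controls moments from above (the lower bound is actually supplied by Lemma~\ref{log moment bound for uniform on cicle} at the call sites), so what you have flagged is an imprecision in the lemma's statement rather than a defect in your argument.
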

    \begin{proof}[ \textbf{Proof of Lemma \ref{derivative L bound}}]
    We start by taking the logarithm to write
    \begin{align}\label{l21}
                \mathbb{P}\left(\big|{P}^{'}_n(X_1)\big| \geq e^{\sqrt{n}} \Big| X_1 \in B_n \right) &= \mathbb{P}\left(\prod_{j=2}^n |X_1-X_j|\geq e^{\sqrt{n}} \Big| X_1 \in B_n\right) \nonumber \\
                &= \mathbb{P}\left(\sum_{j=2}^n \log|X_1-X_j|\geq \sqrt{n}\Big| X_1 \in B_n\right)\nonumber \\
                &= \frac{\mathbb{P}\left(\sum_{j=2}^n \log|X_1-X_j|\geq \sqrt{n},{ X_1 \in B_n}\right)}{\mathbb{P}\left( X_1 \in B_n\right)} \nonumber \\
                &= \frac{1}{\mathbb{P}( X_1 \in B_n)}\bigintss_{B_n} \mathbb{P}\left(\sum_{j=2}^n \log|z-X_j|\geq \sqrt{n} \right)d\mu(z).
            \end{align}
    We estimate the probability inside the integral in (\ref{l21}) using Berry–Esseen theorem \eqref{BE} to arrive at
            \begin{align}\label{l22}
               &\frac{1}{\mathbb{P}( X_1 \in B_n)} \bigintss_{B_n} \mathbb{P}\left(\sum_{j=2}^n \left(\log|z-X_j|-\mathbb{E}[\log|z-X_j|]\right)\geq \sqrt{n}-(n-1)\mathbb{E}[\log|z-X_j|]\right)d\mu(z)\nonumber \\
                     &\geq  \frac{1}{\mathbb{P}( X_1 \in B_n)}\bigintss_{B_n} \mathbb{P}\left(\frac{1}{\sqrt{n}}\sum_{j=2}^n\left({\log|z-X_j|-\mathbb{E}[\log|z-X_j|]}\right)\geq (M+1) \right)d\mu(z)\nonumber \\
                      &\geq \frac{1}{\mathbb{P}( X_1 \in B_n)}\bigintss_{B_n} \left(\Phi\left(\frac{M+1}{\sigma(z)}\right)-\frac{C\rho(z)}{\sigma^3(z)\sqrt{n}}\right) d\mu(z),
            \end{align}  
            where $\sigma^2(z)=\mathbb{E}\left[\left(\log|z-X_j|\right)^2 \right]$, $\rho(z)=\mathbb{E}\left[\left|\log|z-X_j|\right|^3 \right]$ and $\Phi$ is the distribution function of standard normal. From the hypothesis, we have uniform upper and lower bounds on $\sigma^2(z)$ and $\rho(z)$ using which we can bound the integrand in (\ref{l22}) as 
            \begin{align}\label{l35}
                \Phi\left(\frac{(M+1)}{\sigma(z)}\right)-\frac{C\rho(z)}{\sigma^3(z)\sqrt{n}}
                \geq \left({C}_1(M)-\frac{C_2}{\sqrt{n}}\right).
            \end{align}
        Putting the bound (\ref{l35}) in the estimate (\ref{l22}) we get the required probability (\ref{l36}) for some absolute constant $\hat{C}$.
        \begin{align*}
            \mathbb{P}\left(|{P}^{'}_n(X_1)| \geq e^{\sqrt{n}}| X_1 \in B_n\right)&= \frac{1}{\mathbb{P}( X_1 \in B_n)}\bigintsss_{B_n} \left({C}_1(M)-\frac{C_2}{\sqrt{n}}\right)d\mu(z)
            \geq\Hat{C}(M). \qedhere
        \end{align*}
    \end{proof}
   
    \begin{Lemma}\textbf{(Bound on higher derivatives)}\label{higher derivative bounds}
         Let $\{X_i\}_{i=1}^\infty$ be a sequence of i.i.d. random variables with law $\mu$, supported on the closed unit disk. If there exists a constant $C>0$, such that $\mb{E}\left[ \frac{1}{|z-X_1|} \right] <C $ for all $z\in \mb{D}$, then for any $\mb{B} \subset \mb{D}$
             \begin{align}
                \mb{E} \left [ \frac{1}{k!}\left|\frac{P^{(k)}_n(X_1)}{{P}^{'}_n(X_1)}\right| \big| X_1 \in \mb{B} \right] \leq \binom{n-1}{k-1}C^{k-1}.
             \end{align}
    \end{Lemma}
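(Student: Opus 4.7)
The plan is to reduce $P_n^{(k)}(X_1)/P_n'(X_1)$ to an elementary symmetric polynomial in the quantities $1/(X_1-X_j)$, so that the hypothesis $\mb{E}[1/|z-X_1|]\le C$ can be applied termwise and the combinatorial factor $\binom{n-1}{k-1}$ arises naturally from the number of summands.

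First I would factor out the root at $X_1$: write $P_n(z)=(z-X_1)Q(z)$ with $Q(z):=\prod_{j=2}^n(z-X_j)$. Since all derivatives of $z-X_1$ of order $\ge 2$ vanish, the Leibniz rule collapses to
\[
P_n^{(k)}(z)=(z-X_1)Q^{(k)}(z)+k\,Q^{(k-1)}(z).
\]
Evaluating at $z=X_1$ gives $P_n^{(k)}(X_1)=k\,Q^{(k-1)}(X_1)$ and, in the case $k=1$, $P_n'(X_1)=Q(X_1)$. Hence
\[
\frac{1}{k!}\,\frac{P_n^{(k)}(X_1)}{P_n'(X_1)}=\frac{1}{(k-1)!}\,\frac{Q^{(k-1)}(X_1)}{Q(X_1)}.
\]

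Next I would invoke (or derive by induction from $Q'/Q=\sum_{j\ge 2}(z-X_j)^{-1}$ and Newton's identities, or by directly expanding $Q^{(k-1)}$ as a sum over ordered choices of $k-1$ factors to differentiate) the standard symmetric-function identity
\[
\frac{Q^{(k-1)}(z)}{(k-1)!\,Q(z)}=\sum_{\substack{S\subset\{2,\ldots,n\}\\ |S|=k-1}}\;\prod_{j\in S}\frac{1}{z-X_j}.
\]

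Finally, I would take absolute values, apply the triangle inequality, and condition on $X_1=z\in\mb{B}$. Because $X_2,\ldots,X_n$ are i.i.d.\ and independent of $X_1$, the conditional expectation factorizes across each $S$, and the hypothesis yields
\[
\mb{E}\!\left[\prod_{j\in S}\frac{1}{|z-X_j|}\right]=\prod_{j\in S}\mb{E}\!\left[\frac{1}{|z-X_j|}\right]\le C^{|S|}=C^{k-1}.
\]
Averaging this uniform bound over $z\in\mb{B}$ with respect to the conditional law of $X_1$, and summing over the $\binom{n-1}{k-1}$ subsets $S$ of size $k-1$ in $\{2,\ldots,n\}$, produces the stated inequality. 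The argument is essentially mechanical; the only step with any substance is the symmetric-function identity for $Q^{(k-1)}/Q$, but this is classical and I do not anticipate any real obstacle.
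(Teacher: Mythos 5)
Your proposal is correct and follows essentially the same route as the paper: factor out the root at $X_1$, apply Leibniz to get $P_n^{(k)}(X_1)=k\,Q^{(k-1)}(X_1)$, expand $Q^{(k-1)}/Q$ into its $\binom{n-1}{k-1}$ elementary-symmetric summands (the paper counts ordered tuples and divides by $(k-1)!$, which is the same thing), and bound each term via conditional independence and the uniform moment hypothesis. No gaps.
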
 
    \begin{proof}[\textbf{Proof of Lemma \ref{higher derivative bounds}}]
    We write ${P_n}(z)$ as $ {P_n}(z)=(z-X_1)Q_n(z)$, where $Q_n(z):=\prod_2^n(z-X_j),$ then differentiation yields,
    $${P}^{(k)}_n(z)=k{Q}^{(k-1)}_n(z)+(z-X_1)Q^{k}_n(z).$$ Putting $z=X_1$ in the above equation, we get $\frac{ {P}^{k}_n(X_1)}{{P}{'}_n(X_1)}=\frac{k{Q}^{(k-1)}_n(X_1)}{Q_n(X_1)}$. Since $X_1$ is not a root of $Q_n(z)$,  $\frac{{Q}^{(k-1)}_n(X_1)}{Q_n(X_1)}$ will have $(n-1)(n-2)...\big(n-(k-1)\big)$ many summands of the form $\left[\frac{1}{(X_1-X_2)...(X_1-X_{k})} \right]$. Here, we only care about the number of summands because after conditioning on $X_1$, all of them will have the same expected value.
            \begin{align*}
            \mb{E} \left [ \frac{1}{k!}\left|\frac{P^{(k)}_n(X_1)}{{P}^{'}_n(X_1)}\right|\big| X_1 \in \mb{B} \right] &\leq \bigintsss_{\mb{B}} \frac{1}{k!}\mb{E}\left(\left|\frac{P^{(k)}_n(X_1)}{{P}^{'}_n(X_1)}\right| \Big| X_1=z\right) \frac{d\mu(z)}{\mu(\mb{B})}\\
                &\leq \bigintsss_{ \mb{B}} \frac{k(n-1)(n-2)...(n-k+1)}{k!}\mb{E}\left(\left|\frac{1}{(z-X_2)...(z-X_{k})}\right|\right) \frac{d\mu(z)}{\mu(\mb{B})}\\
                &\leq  \binom{n-1}{k-1} \bigintsss_{\mb{B} } \left(\mb{E}\left|\frac{1}{(z-X_2)}\right| \right)^{k-1} \frac{d\mu(z)}{\mu(\mb{B})}\\
                &\leq \binom{n-1}{k-1} C^{k-1},
            \end{align*}
    where we got the last estimate using the hypothesis of the lemma.
    \end{proof}
We will need one last lemma from complex analysis which relates the number of components of a polynomial lemniscate with the number of critical points with critical value bigger or equal to $1$.
        \begin{Lemma}\label{Components and critical points}
        Let $Q_n(z)$, $\Lambda(Q_n)$ be as in \eqref{deterministic poly}, and $\{\beta_j\}_{j=1}^{n-1}$ be the set of critical points of $Q_n$. Then,
        \begin{align*}
           C(\Lambda)=1+ \left|\left\{\beta_j: |P(\beta_j)| \geq 1\right \}\right|.
        \end{align*}    
    \end{Lemma}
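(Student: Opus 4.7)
The plan is to establish the lemma by applying the Riemann--Hurwitz formula to the proper holomorphic restriction $Q_n|_\Lambda : \Lambda \to \mb{D}$, after first showing that every component of $\Lambda$ is topologically a disk.

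\emph{Step 1: Simple connectivity of components.} Let $V$ be a component of $\Lambda$, and suppose some Jordan curve $\gamma \subset V$ bounds a region $D$ containing points outside $V$. Since $|Q_n| < 1$ on the compact set $\gamma$, the maximum modulus principle applied on $\overline D$ forces $|Q_n| < 1$ throughout $\overline D$, so $\overline D \subset \Lambda$. Because $\overline D$ is connected and shares $\gamma$ with $V$, it must lie in the single component $V$, contradicting the existence of points of $D$ outside $V$. Thus every component of $\Lambda$ is simply connected, and $\chi(\Lambda) = C(\Lambda)$.

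\emph{Step 2: Riemann--Hurwitz on each component.} For each component $V$, the restriction $Q_n|_V : V \to \mb{D}$ is a proper holomorphic map: the preimage of any compact $K \subset \mb{D}$ is bounded, closed in $V$, and its limit points remain in $V$ because $|Q_n|$ stays strictly below $1$ on $K$. Its degree $d_V$ equals the number of zeros of $Q_n$ lying in $V$ counted with multiplicity, and since all zeros of $Q_n$ lie in $\Lambda$ (they satisfy $|Q_n| = 0 < 1$), we have $\sum_V d_V = n$. Riemann--Hurwitz then gives $\chi(V) = d_V \chi(\mb{D}) - R_V = d_V - R_V$, where $R_V$ sums the multiplicities (as zeros of $Q_n'$) of the critical points of $Q_n$ that lie in $V$.

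\emph{Step 3: Assembling the count.} Summing over components and using Step 1,
\begin{align*}
C(\Lambda) = \chi(\Lambda) = \sum_V (d_V - R_V) = n - R,
\end{align*}
where $R$ is the total multiplicity of zeros of $Q_n'$ contained in $\Lambda$. Since $\deg Q_n' = n-1$ and a critical point $\beta_j$ lies in $\Lambda$ precisely when $|Q_n(\beta_j)| < 1$, we get $R = (n-1) - |\{\beta_j : |Q_n(\beta_j)| \geq 1\}|$, and substituting yields the claimed formula.

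The most delicate point to handle carefully is the borderline case $|Q_n(\beta_j)| = 1$: such a $\beta_j$ lies on $\partial \Lambda$ and therefore does not contribute to the ramification tally inside $\Lambda$, which is precisely why the weak inequality $\geq 1$ (and not $> 1$) is the right threshold. This reflects the fact that $Q_n|_V$ is proper as a map into the \emph{open} disk $\mb{D}$, so critical points mapping to $\partial \mb{D}$ are legitimately excluded from the Riemann--Hurwitz count. A second, more routine check is the properness of $Q_n|_V$ together with the identity $\sum_V d_V = n$, both of which follow from $Q_n : \mb{C} \to \mb{C}$ being proper of degree $n$ and the fact that all its zeros sit inside $\Lambda$.
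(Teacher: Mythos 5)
Your proof is correct, and it reaches the identity by a genuinely different route from the paper. The paper's argument cites a classical result (Titchmarsh) that a simple closed level curve of $f$ enclosing $d$ zeros encloses exactly $d-1$ zeros of $f'$, applies it to each component, and then does the same final bookkeeping you do: the critical points with $|Q_n(\beta_j)|\geq 1$ number $(n-1)-\sum_i(n_i-1)=m-1$. You instead derive the per-component count $R_V=d_V-1$ from first principles: simple connectivity of each component via the maximum principle, properness of $Q_n|_V:V\to\mb{D}$ with $\sum_V d_V=n$, and Riemann--Hurwitz. What your version buys is robustness at the degenerate level: when some critical value has modulus exactly $1$, the boundary of a component is not a simple closed curve (it is pinched at the critical point, and components may touch), so the hypothesis of the cited level-curve theorem is not literally met; your formulation as a proper map into the \emph{open} disk makes the exclusion of such boundary critical points automatic, which is exactly the borderline case you flag. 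What the paper's version buys is brevity, at the cost of implicitly assuming nondegenerate boundaries (or requiring a small perturbation argument to cover the degenerate case). One cosmetic remark: since each $V$ is already known to be simply connected, Riemann--Hurwitz there reads $1=d_V-R_V$ directly, so your Step 3 could be phrased componentwise without invoking $\chi(\Lambda)$; and you should make explicit, as your count requires, that $\{\beta_j\}_{j=1}^{n-1}$ is the critical set counted with multiplicity.
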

    \begin{proof}
        Let us assume that $C(\Lambda)=m$, i.e. there are $m$ many components of the lemniscate. Let $n_1,...,n_m$ be the number of zeroes in each of the components. We know that for a simple closed level curve of $f(z),$ say $\mathcal{C}$ if $f(z)$ is analytic up to the boundary of $\mathcal{C}$ and has $n$ zeroes inside $\mathcal{C}$, then $f{'}(z)$ has $(n-1)$ zeros inside it. The proof of this result can be found in  \cite{theoryoffunctionstitchmarsh}, Proposition $3.55$. Then the component containing $n_i$ many zeroes will have $(n_i-1)$ many critical points inside the component. Since all these critical points are inside the lemniscate, all the critical values are strictly less than 1. Therefore, the following algebraic manipulations yield the required equality.
        \begin{align*}
            \left|\left\{\beta_j: |P(\beta_j)| \geq 1\right \}\right| &= (n-1)- \left|\left\{\beta_j: |P(\beta_j)| < 1\right \}\right|\\
            &= (n-1)- \sum_{i=1}^m (n_i-1)=(m-1) \qedhere
        \end{align*}
        
    \end{proof}

\section{Proof of theorem \ref{component uniform on disk}}

    \begin{proof}[\textbf{Proof of theorem \ref{component uniform on disk}}]\textbf{(Lower bound)}  The proof of the lower bound in both the theorems is based on an estimate of the number of isolated components. We start by defining what we mean by an isolated component of a polynomial. Let $Q_n(z)$ be defined as in (\ref{deterministic poly}), then we say that a root $z_j$ forms an \emph{isolated component} if there exists a ball $\mathcal{B}$ containing $z_j$ such that, 
            \begin{align}\label{isolated}
                 \begin{cases}
                    & z_k \notin \mathcal{B}, \hspace{.57in} \forall k \neq j \\[10pt]
                    &|Q_n(z)| \geq 1,  \hspace{.25in}  \forall z \in\partial\mathcal{B}.
                 \end{cases}
            \end{align}
    The key observation here is that bounds on the derivatives at the root provide a sufficient condition for an isolated component. Suppose for the root $z_1$ there exists some $r>0$ such that the following holds, 
    \begin{align}\label{sufficient condition for component}
        \begin{cases}
            &|{Q}^{'}_n(z_1)\frac{r}{2}| \geq 1 \\[10pt]
            & \Big|\frac{Q^{(k)}_n(z_1)\frac{r^k}{k!}}{{Q}^{'}_n(z_1)\frac{r}{1!}}\Big| 
            < \frac{1}{2n^2} , \hspace{.25in}  \textit{for } k=2,...,n \\[10pt]
            &\underset{2\leq j  \leq n}{\min}|z_1-z_j|>r.
        \end{cases}
    \end{align}
    Then using Taylor series expansion of $Q_n(z)$ for $z\in \partial B(z_1,r)$ we get,
     \begin{align}\label{calc for sufficient cond} 
             |Q_n(z)|
            &\geq \left|Q^{'}_n(z_1){r}\right| - \sum_{k=2}^n \left|Q^{(k)}_n(z_1)\frac{r^k}{k!}\right|\nonumber\\
            & \geq\Big|Q^{'}_n(z_1)r\Big|\left(1- \sum_{k=2}^n\frac{|Q^{(k)}_n(z_1)\frac{r^k}{k!}|}{|Q^{'}_n(z_1)\frac{r}{1!}|}\right)\nonumber \\
            &\geq \Big|Q^{'}_n(z_1)r\Big| \left(1- \sum_{k=2}^n \frac{1}{2n^2}\right) \nonumber \\
            &\geq \left|Q^{'}_n(z_1)\frac{r}{2}\right| \geq 1.
        \end{align}
    This ensures that there is a connected component of the lemniscate inside the disk $B(z_1,r)$. 
    We now define for each $1\leq i\leq n,$ the event $L_i=\{X_i\hspace{0.05in}\mbox{forms an isolated component}\}$. Then it immediately follows that
            \begin{align*}
                \mb{E}\left[C(\Lambda_n)\right]&\geq \mb{E}\left[\sum_{i=1}^n \mathbbm{1}_{L_i}\right]\geq n\mb{E}\left[\mathbbm{1}_{L_i}\right]
                \geq n\mb{P}\left( L_1 \right).
            \end{align*} 
        Since the isolated roots are near the unit circle with high probability we only consider roots lying in $A_n:=\left \{z:1-\frac{1}{\sqrt{n}}<|z|<1 \right \}$.
            \begin{align}\label{a2}
                \mb{E}[C(\Lambda_n)]&\geq n\mb{P}\left( L_1|X_1 \in A_n \right)\mathbb{P}( X_1 \in A_n)
                \geq \sqrt{n}\mb{P}\left( L_1|X_1 \in A_n \right).
            \end{align}
    We now define the following events with $r_n=\frac{1}{n^6},$
        \begin{align}\label{events for lower bound}
            \begin{cases}
                &G_1:=\left \{ |{P}^{'}_n(X_1)| \geq e^{n^{1/2}} \right \}\\[1em]
                &G_k:=\left \{ \left|\frac{P^{(k)}_n(X_1)\frac{r_n^k}{k!}}{{P}^{'}_n(X_1)\frac{r_n}{1!}}\right| < \frac{1}{2n^2}  \Big|\right \}, \textit{  for } k =2,...,n.\\[1.5em]
                &G_{n+1}:=\left \{ \underset{{2\leq j \leq n}}{\min}|X_1-X_j| >\frac{1}{n^6} \right \}.
            \end{cases}
        \end{align}
    \noindent In the setting of (\ref{sufficient condition for component}), occurrence of the events \eqref{events for lower bound} implies that $X_1$ forms an \emph{isolated component}. Hence, 
        \begin{align}\label{prob bound}
           \mb{P}\left( L_1 |X_1 \in A_n \right)
            \geq \mb{P}\left(\cap_{j=1}^{n+1}G_j \big|X_1 \in A_n\right)
        \end{align}
    Now we will estimate the conditional probabilities of $G_1,{G_2},...,{G_{n+1}}$ one by one. From Lemma \ref{derivative L bound} we have
    \begin{align}\label{a5}
        \mb{P}\left(G_1|X_1 \in A_n\right)\geq C_1.
    \end{align}
    Using the Lemma \ref{higher derivative bounds} with $\mb{B}=A_n$ and the uniform bound of moment from Lemma \ref{moment bound for uniform on disk}, we get for $k=2,...,n$
    \begin{align}\label{a3}
        \mb{E}\left(\left|\frac{P^{(k)}_n(X_1)\frac{r_n^k}{k!}}{{P}^{'}_n(X_1)\frac{r_n}{1!}}\right| \Big|X_1 \in A_n\right) & \leq \frac{C^{k-1}}{n^{4(k-1)}}.
    \end{align}
    Now conditional Markov inequality with (\ref{a3}) yields,
     \begin{align}\label{a6}
        \mb{P}\left(G_k^c|X_1 \in A_n\right)\geq \mb{P}\left(\left|\frac{P^{(k)}_n(X_1)\frac{r_n^k}{k!}}{{P}^{'}_n(X_1)\frac{r_n}{1!}}\right|\geq \frac{1}{2n^2} \Big|X_1 \in A_n\right)  \leq \frac{1}{n^{2(k-1)}} 
    \end{align}
    Lastly, the Lemma \ref{distance between roots} with $t=\frac{1}{n^6}, f(x)=x^2$, and $C=1$ gives,
    \begin{align}\label{a4}
        \mb{P}\left(G_{n+1}|X_1 \in A_n\right) \geq \left(1-\frac{1}{n^{12}}\right)^{n-1} \geq 1-\frac{1}{n^{10}}
    \end{align}
    Combining the estimates  (\ref{a5}), (\ref{a6}), (\ref{a4}), we arrive at
        \begin{align}\label{a1}
         \mb{P}\big(\cap_{j=1}^{n+1}G_j |X_1 \in A_n\big)  &\geq \mb{P}(G_1|X_1 \in A_n)-\mb{P}\Big(  G_1 \cap \big(\cap_{k=2}^{n+1} {G_k}\big)^c |X_1 \in A_n\Big)\nonumber \\
             &\geq  C_1-\mb{P}\Big( \cup_{k=2}^n \big(G_1 \cap {G_k}^c|X_1 \in A_n\big)\Big)\nonumber \\
             & \geq C_1-\sum_{k=2}^n\mb{P}\Big( {G_k}^c |X_1 \in A_n\Big)\nonumber \\
             & \geq C_1-\frac{1}{n},
        \end{align}
    where we have used $\mb{P}(A\cap B)=\mb{P}(A)-\mb(A\cap B^c)$ in the first step and the union bound in the third step.
    Finally, putting (\ref{a1}) in (\ref{a2})  the required bound is obtained.
    \begin{align*}
        \mb{E}[C(\Lambda_n)] \geq \sqrt{n}\mb{P}\left( L_1|X_1 \in A_n \right) \geq \sqrt{n} \mb{P}\big( \cap_{j=1}^{n+1}G_j |X_1 \in A_n\big) \geq C_1\sqrt{n}.\\
    \end{align*}    
   \textbf{(Upper bound)} The proof of the upper bound uses Lemma \ref{Components and critical points} to relate the number of components to certain critical points.  We will take an indirect route to estimate the number of such critical points via the roots. We say a root $z_1$  of the polynomial $Q_n(z)$ is \emph{good}, if there exists $r>0$ such that,
          \begin{align}\label{Good root}
                 \begin{cases}
                    &B\left(z_1,r\right) \subset \Lambda_n,\\[.5em]
                    &\underset{2\leq j \leq n}{\min}|z_1-z_j| >3r, \\[1em]
                    &\exists \textit{ a unique critical point } \xi \in B\left(z_1,r\right).
                \end{cases}
             \end{align}   
        Resembling the proof of lower bound, we first give a sufficient condition for the ball of radius $ r_n:=\frac{1}{n^{3/4}} $ around $z_1$ to be inside the lemniscate. Assume the following holds, 
        \begin{align}\label{suff cond for ball outside the lemniscate}
        \begin{cases}
            &0<|{Q}^{'}_n(z_1)| < e^{-\sqrt{n}},\\[1em]
            & \left|\frac{Q^{(k)}_n(z_1)\frac{r_n^k}{k!}}{{Q}^{'}_n(z_1)\frac{r_n}{1!}}\right| < n^2 \binom{n-1}{k-1}\left(\frac{C}{n^{3/4}} \right)^{k-1}, \quad 2\leq k \leq n.
        \end{cases}
        \end{align}
    Then for $z \in \partial B\left(z_1, r_n\right)$  and  $n$ large enough, using (\ref{suff cond for ball outside the lemniscate}) we have, 
        \begin{align*}
             |Q_n(z)|& \leq  \left|Q^{'}_n(z_1)\frac{r_n}{1!}\right|+\left|Q^{''}_n(z_1)\frac{r_n^2}{2!}\right|+...+\left|Q^{(k)}_n(z_1)\frac{r_n^k}{k!}\right|+...+\left|Q^{(n)}_n(z_1)\frac{r_n^n}{n!}\right|\\[.75em]
            & \leq\left|Q^{'}_n(z_1)r_n\right|\left(1+\sum_{k=2}^n\frac{|Q^{(k)}_n(z_1)\frac{r_n^k}{k!}|}{|Q^{'}_n(z_1)\frac{r_n}{1!}|}\right)\\[.5em]
            &\leq \left|Q^{'}_n(z_1)r_n\right| \left(1+\sum_{k=2}^n  n^2\binom{n-1}{k-1}\left(\frac{C}{n^{3/4}} \right)^{k-1}\right)\\[.75em]
            &\leq n^2 e^{-\sqrt{n}} \left( 1+ \frac{C}{n^{3/4}} \right)^{n-1}\\[.75em]
            &\leq n^2 e^{-\sqrt{n}} e^{Cn^{1/4}}<1.
        \end{align*}
    The maximum principle then ensures that the disk $B(z_1,r)$ is inside the lemniscate.
    Let us now go back to the random setting and define the events $T_i:=\Big\{ X_i \mbox{ is a \emph{good} root with r}=\frac{1}{n^{3/4}} \Big\}$.
    The conditions in (\ref{Good root}) immediately imply that the number of \emph{good} roots is less than or equal to the number of critical points with critical value less than $1$, therefore,
            \begin{align*}
               \mb{E}[C(\Lambda_n)]&=n-\mb{E}\left[\{ \textit{Number of critical points with critical value less than } 1 \}\right]+1\\
               &\leq n-\mb{E}\left[ \sum_1^{n}\mathbbm{1}_{T_i}\right]+1
               \leq  n\left(1-\mb{P}(T_1)\right)+1.
            \end{align*}
        By concentration estimates, we expect that most of the \emph{good} roots are within the annulus $\mathbb{D}_n:=  \{ z: \frac{3}{n^{1/4}} <|z|\leq 1-\frac{1}{\sqrt{n}}\} $. So we estimate
        \begin{align}\label{b1}
             \mb{E}[C(\Lambda_n)] \leq n\left(1-\mb{P}\left(T_1|X_1 \in \mb{D}_n\right)\mb{P}(X_1 \in \mb{D}_n)\right)+1.
        \end{align}
    Now let us define the events $H_1,...,H_{n+1}$ with $ r_n:=\frac{1}{n^{3/4}} $.  
        \begin{align}\label{events H1}
            \begin{cases}
                &H_1:=\left \{ \left|{P}^{'}_n(X_1)\right| < e^{-\frac{\sqrt{n}}{2}}  \right\} \\[1.3em]
            &H_k:=\left \{ \left|\frac{P^{(k)}_n(X_1)\frac{r_n^k}{k!}}{{P}^{'}_n(X_1)\frac{r_n}{1!}}\right| < n^2 \binom{n-1}{k-1}\left(\frac{C}{n^{3/4}} \right)^{k-1} \right \}, \textit{  for } k =2,...,n.\\[1.75em]
            &H_{n+1}:=\left\{ \underset{2\leq j \leq n}{\min}|X_1-X_j| >3r_n \right \} \\[1.5em]
            &H_{n+2}:=\big\{\exists \textit{ a unique critical point } \xi \in B\left(X_1,r_n\right)\big            \}.
            \end{cases}
        \end{align}
    Notice that on the events \eqref{events H1}, we have a good root. Therefore
        \begin{align}\label{b2}
            \mb{P}\left(T_1|X_1 \in \mb{D}_n\right) \geq \mb{P}\left(\cap_{j=1}^{n+2}H_j|X_1 \in \mb{D}_n\right).
        \end{align}
    
    Next, we estimate the conditional probabilities of each of the events ${H_{1}},...,{H_{n+2}}$. To estimate the probability of the event ${H_{1}}$ we require the following lemma.
    \begin{Lemma}\textbf{( Upper bound on the first derivative)}\label{derivative U bound}
     Let $\{X_i\}_{i=1}^\infty$ be a sequence of i.i.d. uniform  random variables in the open unit disk. Let $
     \mathbb{D}_n:=\left\{ z: \frac{3}{n^{1/4}}<|z|\leq 1-\frac{1}{\sqrt{n}}\right\}$. Then there exists a constant $C>0$ such that,
        \begin{align}
              \mathbb{P}\left(|{P}^{'}_n(X_1)| \leq e^{-\frac{\sqrt{n}}{2}} \big| X_1 \in D_n \right) \geq 1-\frac{C}{\sqrt{n}}.
        \end{align}
    \end{Lemma}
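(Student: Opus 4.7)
The plan is to condition on $X_1=z$ and reduce the problem to an iid sum estimate. Since $P_n(z) = (z - X_1)\prod_{j=2}^n(z - X_j)$, we have $\log|P_n'(X_1)| = \sum_{j=2}^n\log|X_1 - X_j|$, and therefore
\begin{align*}
\mathbb{P}(|P_n'(X_1)|\leq e^{-\sqrt{n}/2}\mid X_1\in\mathbb{D}_n) = \frac{1}{\mu(\mathbb{D}_n)}\int_{\mathbb{D}_n}\mathbb{P}(S_z\leq -\sqrt{n}/2)\,d\mu(z),
\end{align*}
where $S_z := \sum_{j=2}^n\log|z-X_j|$ is a sum of iid random variables with mean $(n-1)U_\mu(z)$ and $U_\mu(z) = (|z|^2-1)/2$ is the logarithmic potential of the uniform measure on $\mathbb{D}$. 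Since $\tfrac{1}{4}r^2\leq\mu(B(z,r))\leq r^2$ for $z\in\mathbb{D}$, $r\leq 2$, Lemma \ref{log moment bound for uniform on cicle} (with $\epsilon=2$) provides uniform two-sided bounds on the variance $\sigma^2(z)$ and the third absolute moment $\rho(z)$ of $\log|z-X_2|$.

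Next, parametrize $z\in\mathbb{D}_n$ by $|z|=1-t/\sqrt{n}$, $t\in[1,\sqrt{n}-3n^{1/4}]$. A direct expansion gives $-(n-1)U_\mu(z)=(t\sqrt{n}-t^2/2)(1-1/n)$, and a short check shows that for $n$ sufficiently large,
\begin{align*}
-\frac{\sqrt{n}}{2}-(n-1)U_\mu(z)\geq c_0\,t\sqrt{n},
\end{align*}
for some $c_0>0$ and every $t$ in the range (the worst case $t=1$ already produces $\sqrt{n}/2-O(1)\geq c_0\sqrt{n}$; for $|z|<3/5$ the left side is of order $n$). Applying Berry-Esseen (Theorem \ref{BE}) to the standardised sum then yields
\begin{align*}
\mathbb{P}(S_z\leq -\sqrt{n}/2)\geq \Phi(A(z))-\frac{C_0}{\sqrt{n}},\qquad A(z):=\frac{-\sqrt{n}/2-(n-1)U_\mu(z)}{\sigma(z)\sqrt{n-1}}\geq c\,t,
\end{align*}
where $c>0$ depends only on the uniform upper bound of $\sigma$.

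Finally I integrate the complementary Gaussian tail in polar coordinates: with $d\mu(z)=r\,dr\,d\theta/\pi$ and $|dr|=dt/\sqrt{n}$,
\begin{align*}
\int_{\mathbb{D}_n}(1-\Phi(A(z)))\,d\mu(z)\leq \frac{2}{\sqrt{n}}\int_1^\infty(1-\Phi(ct))\,dt=\frac{C_1}{\sqrt{n}}
\end{align*}
by the standard Gaussian tail estimate. Adding the Berry-Esseen error ($O(1/\sqrt{n})$ after integration) and dividing by $\mu(\mathbb{D}_n)\geq 1/2$ gives $\mathbb{P}(|P_n'(X_1)|>e^{-\sqrt{n}/2}\mid X_1\in\mathbb{D}_n)\leq C/\sqrt{n}$, which proves the claim after taking complements. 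The main difficulty is that near the outer boundary $|z|\approx 1-1/\sqrt{n}$ the mean of $S_z$ sits exactly at Gaussian distance from the cutoff $-\sqrt{n}/2$, so crude tail inequalities (Chebyshev, Bennett in this regime) are insufficient and one genuinely needs Berry-Esseen; but the offending boundary layer of width $1/\sqrt{n}$ has $\mu$-mass $O(1/\sqrt{n})$, which is precisely what produces the stated rate.
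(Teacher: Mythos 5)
Your proof is correct and follows essentially the same route as the paper: condition on $X_1=z$, reduce to the i.i.d.\ sum $\sum_{j\geq 2}\log|z-X_j|$ with mean $(n-1)(|z|^2-1)/2$, bound the tail pointwise in $z$, and integrate in polar coordinates so that the boundary layer of width $O(1/\sqrt{n})$ produces the stated rate. The only difference is the pointwise tool: you use Berry--Esseen plus the Gaussian tail, while the paper uses Bennett's inequality together with the two-sided bound $C_3u^2\leq h(u)\leq C_4u^2$ on $[0,1]$, which yields the same sub-Gaussian integrand $\exp(-c(t-\tfrac12)^2)$ and the same $O(1/\sqrt{n})$ after the change of variables $s=1-r^2$. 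Your closing remark that Bennett is insufficient in the boundary regime is therefore mistaken --- for the upper-bound direction one only needs a sub-Gaussian upper tail, not the matching CLT lower bound, and Bennett supplies exactly that --- but this aside does not affect the validity of your argument.
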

    \begin{proof}[\textbf{Proof of Lemma \ref{derivative U bound}}]
   This proof adopts a methodology similar to Lemma \ref{derivative L bound}, but with a slight variation. Instead of using a uniform bound for the integrand, we actually perform the integration to achieve the desired inequality. We have
                \begin{align}\label{l31}
                    \mathbb{P}\left(\big|{P}^{'}_n(X_1)\Big| \geq e^{-\frac{\sqrt{n}}{2}} \big| X_1 \in \mathbb{D}_n \right) &= \mathbb{P}\left(\prod_{j=2}^n |X_1-X_j|\geq e^{-\frac{\sqrt{n}}{2}} \Big| X_1 \in \mathbb{D}_n\right)\nonumber \\
                    &= \mathbb{P}\left(\sum_{j=2}^n \log|X_1-X_j|\geq{-\frac{\sqrt{n}}{2}}  \Big| X_1 \in \mathbb{D}_n\right)\nonumber \\
                    &= \frac{1}{\mathbb{P}( X_1 \in \mathbb{D}_n)}\bigintsss_{ \mathbb{D}_n} \mathbb{P}\left(\sum_{j=2}^n \log|z-X_j|\geq {-\frac{\sqrt{n}}{2}} \right)d\mu(z).
                \end{align} 
    We use Bennett's inequality (\ref{Ben}) after subtracting the mean in (\ref{l31}), with the uniform upper and lower bounds of $\mb{E}\left[ \log|z-X_j|^2 \right]$ from Lemma \ref{log moment bound for uniform on cicle} to obtain, 
    \begin{align}\label{l32}
         & \frac{1}{\mathbb{P}( X_1 \in \mathbb{D}_n)}\bigintss_{ \mathbb{D}_n} \mathbb{P}\left(\sum_{j=2}^n \left(\log|z-X_j|-\mb{E} \left[\log|z-X_j|\right]\right)\geq \frac{(n-1)(1-|z|^2)}{2}{-\frac{\sqrt{n}}{2}} \right)d\mu(z)\nonumber\\[.35em]
         &\leq \frac{1}{\mathbb{P}( X_1 \in \mathbb{D}_n)}\bigintsss_{ \mathbb{D}_n} \exp{\left(-C_1nh\left(  \frac{(n-1)(1-|z|^2)-\sqrt{n}}{2C_2(n-1)}\right)\right)}d\mu(z)\nonumber \\[.35em]
         &\leq \frac{1}{\pi \mathbb{P}( X_1 \in \mathbb{D}_n)} \bigintsss_{0}^{2\pi}\bigintsss_{\frac{3}{n^{1/4}}}^{1-\frac{1}{\sqrt{n}}}\exp{\left(-C_1nh\left(  \frac{(n-1)(1-r^2)-\frac{\sqrt{n}}{2}}{2C_2(n-1)}\right)\right)}r dr d\theta \nonumber \\[.35em]
         &\leq \frac{2}{\mathbb{P}( X_1 \in \mathbb{D}_n)} \bigintsss_{\frac{3}{n^{1/4}}}^{1-\frac{1}{\sqrt{n}}}\exp{\left(-C_1 n h\left(  \frac{(n-1)(1-r^2)-\sqrt{n}}{2C_2(n-1)}\right)\right)} r dr
    \end{align}
    To estimate the integral we do a change of variables of $(1-r^2)=s$ in (\ref{l32}) and use the fact that $ C_3 u^2 \leq h(u) \leq C_4 u^2, $  for all $u \in [0,1]$, for some constants $C_3, C_4>0$. Then \eqref{l32} becomes 
    \begin{align*}
      &\frac{2}{\mathbb{P}( X_1 \in \mathbb{D}_n)} \bigintsss_{\frac{2}{\sqrt{n}}-\frac{1}{n}}^{1-\frac{9}{\sqrt{n}}} \exp{\left(-C_1 n h\left(  \frac{(n-1)s-\sqrt{n}}{2C_2(n-1)}\right)\right)} ds\nonumber\\[.35em]
          &\leq  \frac{2}{\mathbb{P}( X_1 \in \mathbb{D}_n)} \bigintsss_{0}^1 \exp{\left(-C_1 n \left({s}-\frac{1}{2\sqrt{n}}\right)^2\right)} ds\\[.35em]
        &\leq  \frac{2}{\mathbb{P}( X_1 \in \mathbb{D}_n)} \bigintsss_{0}^\infty \exp{\left(-x^2\right)} \frac{dx}{C_1\sqrt{n}} \\[.35em]
        &\leq  \frac{C}{\sqrt{n}}.
    \end{align*}
We finish the proof by taking the probability of the complementary event.
    \end{proof}
Using Lemma \ref{derivative U bound} above we deduce that,
        \begin{align}\label{b3}
            \mathbb{P}\left(H_1 \big|X_1 \in \mb{D}_n \right)=\mathbb{P}\left(\left|{P}^{'}_n(X_1)\right| < e^{-\frac{\sqrt{n}}{2}} \Big| X_1 \in \mathbb{D}_n \right) \geq 1-\frac{C_1}{\sqrt{n}}.
        \end{align}
       
    Now we estimate $\mathbb{P}\left(H_k \big|X_1 \in \mb{D}_n \right)$ for $2 \leq k \leq n $. By taking $\mb{B}=\mb{D}_n$ in Lemma \ref{higher derivative bounds} and the uniform bound from Lemma \ref{moment bound for uniform on disk},  we arrive at
        \begin{align}\label{b4}
            \mb{E}\left[\left|\frac{P^{(k)}_n(X_1)\frac{r_n^k}{k!}}{{P}^{'}_n(X_1)\frac{r_n}{1!}}\right| \Bigg| X_1 \in \mathbb{D}_n \right] 
            &\leq \binom{n-1}{k-1}\left(\frac{C}{n^{3/4}} \right)^{k-1}.
        \end{align}
       
     Now conditional Markov inequality along with (\ref{b4}) gives, 
        \begin{align}\label{b5}
            \mb{P}\left(H_k\big|X_1 \in \mb{D}_n \right)= \mb{P} \left(\left|\frac{P^{(k)}_n(X_1)\frac{r_n^k}{k!}}{{P}^{'}_n(X_1)\frac{r_n}{1!}}\right| \geq n^2 \binom{n-1}{k-1}\left(\frac{C}{n^{3/4}} \right)^{k-1} \Big| X_1 \in \mathbb{D}_n \right)  \leq \frac{1}{n^2}. 
        \end{align}
    Using Lemma \ref{distance between roots} with $t=\frac{1}{n^{3/4}}$ and $f(x)=x^2$ we obtain,
        \begin{align}\label{b6}
            \mb{P}\left(H_{n+1}\big|X_1 \in \mb{D}_n\right)&=\mb{P}\left( \underset{2\leq j \leq n}{\min}|X_1-X_j| >\frac{3}{n^{3/4}}  \Big| X_1 \in \mathbb{D}_n \right) \nonumber \\[.75em]
            &\geq \left(1-\frac{1}{n^{3/2}}\right)^{n-1} \geq 1-\frac{2}{\sqrt{n}}.
        \end{align}       
    Lastly, the probability bound for the event $H_{n+2}$ follows from the following lemma.
    \begin{Lemma}\textbf{( Distance between roots and critical points )}\label{pairing}
         Let $\{X_i\}_{i=1}^\infty$ be a sequence of i.i.d. uniform random variables in the open unit disk. We define the random polynomial $P_n$ as in \eqref{01}. Let $\mb{D}_n := \{z: \frac{3}{n^{1/4}}< |z| < 1-\frac{1}{n^{1/2}}\}$, and $r_n = \frac{1}{n^{3/4}}.$ Then
         \begin{align}
            \mb{P}\left(\big\{\exists \textit{ a unique critical point  } \xi \in B\left(X_1,r_n\right)\big\} \big| X_1 \in \mb{D}_n\right) \geq 1-\frac{C}{\sqrt{n}}.
        \end{align}
     \end{Lemma}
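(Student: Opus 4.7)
The plan is to apply Rouché's theorem on the circle $|z-X_1|=r_n$. Factor $P_n(z)=(z-X_1)R(z)$ with $R(z):=\prod_{k\ge 2}(z-X_k)$, and set $T(z):=R'(z)/R(z)=\sum_{k\ge 2}(z-X_k)^{-1}$, so that $P_n'(z)/R(z)=1+(z-X_1)\,T(z)$. On the event $H_{n+1}$ from \eqref{events H1}, $R$ does not vanish on $\overline{B(X_1,r_n)}$, so the critical points of $P_n$ in this disc are exactly the zeros of $f(w):=1+w\,T(X_1+w)$ in $\{|w|<r_n\}$, where $w:=z-X_1$. I would compare $f$ to its linearization $g(w):=1+S_1 w$ at $w=0$, where $S_1:=T(X_1)=\sum_{k\ge 2}(X_1-X_k)^{-1}$; the function $g$ has a single zero at $w_*=-1/S_1$. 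Thus, conditional on $X_1\in\mathbb{D}_n$, it suffices to show with probability $\ge 1-C/\sqrt n$:
\begin{align*}
\textbf{(a)}\ |S_1|\ge 2n^{3/4},\qquad\qquad\textbf{(b)}\ \sup_{|w|=r_n}|f(w)-g(w)|<\inf_{|w|=r_n}|g(w)|.
\end{align*}
Granted (a), we have $|w_*|\le r_n/2$ and $\inf_{|w|=r_n}|g(w)|\ge|S_1|r_n-1\ge 1$; combined with (b), Rouché's theorem forces $f$ and $g$ to have the same number of zeros in $\{|w|<r_n\}$, namely one.

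For (a), I condition on $X_1=z\in\mathbb{D}_n$. A direct residue calculation for the uniform law on $\mathbb{D}$ (expand $(z-re^{i\theta})^{-1}$ in a Laurent series and integrate in $\theta$) yields $\mathbb{E}[(z-X)^{-1}]=\bar z$ for $|z|\le 1$, so $|\mathbb{E}[S_1\mid X_1=z]|=(n-1)|z|\ge (3-o(1))n^{3/4}$ on $\mathbb{D}_n$. Since $(z-X)^{-1}$ lies in $L^p$ only for $p<2$, direct variance bounds are unavailable, so I truncate at the scale $|X_1-X_k|>n^{-3/4}$: using $\mu(B(z,n^{-3/4}))\le n^{-3/2}$ and a union bound, the truncation fails with probability at most $C/\sqrt n$, and on its success the truncated variables satisfy $|Y_k|\le n^{3/4}$ and $\mathbb{E}[|Y_k|^2]=O(\log n)$ (the logarithm coming from $\int_{n^{-3/4}}^{2}r^{-1}\,dr$). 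Bennett's inequality (Theorem \ref{Ben}), applied exactly as in the proof of Lemma \ref{derivative U bound}, then confines $|S_1-\mathbb{E}[S_1]|$ to $o(n^{3/4})$ with probability $\ge 1-C/\sqrt n$; combined with the lower bound on $|\mathbb{E}[S_1]|$ this gives (a).

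For (b), I write $f(w)-g(w)=-w^2 A(w)$ with $A(w):=\sum_{k\ge 2}[(X_1+w-X_k)(X_1-X_k)]^{-1}$, holomorphic in $w$ on $B(0,2r_n)$ under $H_{n+1}$. A partial-fractions residue calculation yields $\mathbb{E}[A(w)\mid X_1=z]=-(n-1)\bar w/w$, so $|\mathbb{E}[f(w)-g(w)]|=(n-1)|w|^2\le 1/\sqrt n$ uniformly on $|w|=r_n$. Using $|X_1+w-X_k|\ge\tfrac{2}{3}|X_1-X_k|$ on $H_{n+1}$ together with the truncated bound $\mathbb{E}\bigl[|X_1-X_k|^{-4}\mathbf{1}_{H_{n+1}}\mid X_1\bigr]\le Cn^{3/2}$, one computes $\mathrm{Var}(A(w)\mid X_1)\le Cn^{5/2}$; Chebyshev then gives $|A(w)-\mathbb{E}[A(w)]|\le n^{3/2}/4$ with probability $\ge 1-C/\sqrt n$ pointwise, and this extends to the whole circle $|w|=r_n$ via a polynomial-size Lipschitz net whose cost is absorbed into $C$ (the derivative $A'(w)$ is controlled by the same type of truncated-moment argument). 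Hence $|f(w)-g(w)|\le r_n^2\bigl(|\mathbb{E}[A(w)]|+n^{3/2}/4\bigr)\le 1/\sqrt n+1/4<1\le\inf|g|$, proving (b). The main obstacle throughout is the heavy-tailed concentration forced by $\mathbb{E}[|z-X|^{-2}]=\infty$; in both (a) and (b) it is resolved by truncating at the precise scale $n^{-3/4}$, which is exactly where the union-bound cost of truncation and the tail of the surviving sum balance at order $1/\sqrt n$.
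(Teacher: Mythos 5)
Your strategy is essentially the paper's: a Rouch\'e comparison on $\partial B(X_1,r_n)$ between a logarithmic-derivative-type object and a deterministic approximation built from the Cauchy transform $\mathbb{E}[(z-X)^{-1}]=\bar z$, whose modulus being $\geq 3n^{-1/4}$ on $\mathbb{D}_n$ (hence $|S_1|\gtrsim n^{3/4}$) is exactly what makes the comparison win against the fluctuation and oscillation terms. The paper compares $\frac1n P_n'/P_n$ to the constant $\bar u$ and counts zeros minus poles, outsourcing the two hard estimates to Lemmas 5.9 and 5.11 of Kabluchko--Seidel, whereas you compare $P_n'/R$ to its linearization $1+S_1w$ and redo the concentration by hand via truncation at scale $n^{-3/4}$. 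That is a legitimate, self-contained variant. However, two of your quantitative steps do not work as literally stated.

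First, in (a): with $b=n^{3/4}$ and $\nu=O(n\log n)$, Bennett's inequality at deviation $t$ gives exponent $\approx \frac{t}{b}\log\bigl(\frac{bt}{\nu}\bigr)$, so confining $|S_1-\mathbb{E}S_1|$ to $o(n^{3/4})$ at probability level $1-C/\sqrt n$ is impossible (the exponent would be $o(\log n)$), and even deviation exactly $n^{3/4}$ costs $C\log n/\sqrt n$. This is repairable: settle for $|S_1|\ge \tfrac32 n^{3/4}$ (deviation $\tfrac32 n^{3/4}$, exponent $\approx\tfrac34\log n$, probability $o(n^{-1/2})$), which still gives $\inf_{|w|=r_n}|g|\ge\tfrac12$, and tighten (b) to $\sup|f-g|<\tfrac12$, which your bound $n^{-1/2}+\tfrac14$ already delivers. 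Second, and more seriously, in (b) the passage from a pointwise Chebyshev bound to the supremum over $|w|=r_n$ via ``a polynomial-size net whose cost is absorbed into $C$'' is wrong: each net point fails with probability only $C/\sqrt n$, so a union bound over polynomially many points is vacuous. The correct mechanism --- and the one the paper's term $(\mathrm{I})$ implements via Lemma 5.9 of \cite{Kabluchko2019} --- is to control the oscillation by a \emph{single} first-moment/Markov bound: on $H_{n+1}$ one has $\sup_{|w|\le r_n}|A'(w)|\le C\sum_k|X_1-X_k|^{-3}\mathbf{1}_{|X_1-X_k|>3r_n}$, whose conditional expectation is $O(n^{7/4})$, so Markov gives a Lipschitz constant $O(n^{9/4})$ off an event of probability $C/\sqrt n$; then one Chebyshev bound at a single $w_0$ plus Lipschitz variation $O(n^{9/4}\cdot r_n)=O(n^{3/2})$ over the circle controls the supremum. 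With those two repairs (and applying Bennett to real and imaginary parts separately, since your $Y_k$ are complex), your argument closes.
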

     
     \begin{proof}[\textbf{Proof of Lemma \ref{pairing}}]
       The proof can essentially be deduced from ideas in \cite{Kabluchko2019}. We first condition on the location of $X_1$ and rewrite the probability as 
       \begin{multline}\label{kab}
            \mb{P}\left(\big\{\exists \textit{ a unique critical point }  \xi \in B\left(X_1,r_n\right)\big\} \big| X_1 \in \mb{D}_n\right) \\
            = \int_{\mb{D}_n}\mb{P}\left(\big\{\exists \textit{ a unique critical point   } \xi \in B\left(X_1,r_n\right)\big\} \big| X_1 = u \right) d\mu(u).
        \end{multline}
       Fixing $u \in \mb{D}_n$ we define the event 
        \begin{align}\label{kab00}
            \mathcal{E}_n(u) := \left\{\sup_{z\in \partial B(u,r_n)} \left| \frac{1}{n} \frac{P_n^{'}(z)}{P_n(z)} - f(u) \right| < |f(u)| \right\},
        \end{align}
     where $r_n =\frac{1}{n^{3/4}}$ and $f(z) :=\mb{E}\left[\frac{1}{z-X_2}\right]= \bar{z}$ is the Cauchy transform of the uniform probability measure on $\mb{D}$. On the event $\mathcal{E}_n(u)$, by Rouche's theorem  (c.f. \cite{conway}, pp.125-126) the difference between the number of zeros and critical points of $ \frac{P_n^{'}(z)}{P_n(z)}$ on $B(u,r_n)$ is same as the difference between the number of zeros and poles of the constant function $z \mapsto f(u)$, which is zero. Now we define another event $\mathcal{F}_n(u):=\{|X_2-u|>3r_n,...,|X_n-u|>3r_n\}$ which guarantees that there is only one root of $P_n$ inside $B(u,r_n)$, hence only one critical point inside $B(u,r_n)$. Following the idea of proof of Lemma \eqref{distance between roots} one can show that $\mb{P}(\mathcal{F}_n(u)) \geq 1-\frac{C}{\sqrt{n}}$, therefore,
    \begin{align}\label{kab0}
        \mb{P}\left(\big\{\exists \textit{ a unique critical point  }\xi \in B\left(u,r_n\right)\big\}\right)
        \geq \mb{P}\left(\mathcal{E}_n(u) \cap \mathcal{F}_n(u)\right) \geq \mb{P}\left(\mathcal{E}_n(u) \right) -\frac{C}{\sqrt{n}}.
    \end{align}
     Next, writing $ \frac{P_n^{'}(z)}{P_n(z)}$ as sum of i.i.d random variables with mean $f(z)$ in \eqref{kab00} we get,
    \begin{align*}
        \mb{P}\left(\mathcal{E}_n(u)\right)=\left\{\sup_{z\in \partial B(u,r_n)} \left| \frac{1}{n(z-u)} + \frac{1}{n}\sum_2^n\frac{1}{z-X_j} - f(u) \right| < |f(u)| \right\}.
    \end{align*}
    Let $\Tilde{z}_n$ be a sequence of complex numbers in $ B(u,r_n)$ converging to $u$. Now adding and subtracting $\frac{1}{n}\sum_2^n\frac{1}{\Tilde{z}_n-X_j}$ and $f(\Tilde{z}_n)$ we bound the probability from below as
    \begin{multline}\label{kab1}
        \mb{P}\left(\mathcal{E}_n(u)\right) \geq \mb{P}\left( \sup_{z\in \partial B(u,r_n)}\left| \frac{1}{n(z-u)}\right|+ \sup_{z,\Tilde{z}_n\in B(u,r_n)}\left|\frac{1}{n}\sum_2^n\left(\frac{1}{z-X_j}-\frac{1}{\Tilde{z}_n-X_j}\right)\right| \right. \\
        \left. + \left| \frac{1}{n}\sum_2^n\frac{1}{\Tilde{z}_n-X_j} -f(\Tilde{z}_n) \right| + |\overline{\Tilde{z}_n-u}| < |f(u)| \right).
    \end{multline}
    Notice that, the maximum of the first and last term in \eqref{kab1} $\max\left\{\sup_{z\in \partial B(u,r_n)}\left| \frac{1}{n(z-u)}\right|,|\overline{\Tilde{z}_n-u}| \right\} \leq  \frac{1}{n^{1/4}}$, whereas $|f(u)| \geq \frac{3}{n^{1/4}}$. Therefore by triangle inequality, we get 
    \begin{align}\label{kab1.9}
        |f(u)|-\sup_{z\in \partial B(u,r_n)}\left| \frac{1}{n(z-u)}\right|-|\overline{\Tilde{z}_n-u}| \geq \frac{|f(u)|}{3}.
    \end{align}
    Plugging the estimate \eqref{kab1.9} in \eqref{kab1} we arrive at,
    \begin{multline}\label{kab2}
        \mb{P}\left(\mathcal{E}_n(u)\right) \geq \\
        \mb{P}\left(\sup_{z,\Tilde{z}_n\in B(u,r_n)}\left|\frac{1}{n}\sum_2^n\left(\frac{1}{z-X_j}-\frac{1}{\Tilde{z}_n-X_j}\right)\right| + {\left| \frac{1}{n}\sum_2^n\frac{1}{\Tilde{z}_n-X_j} -f(\Tilde{z}_n) \right|} < \frac{|f(u)|}{3} \right).
    \end{multline}
    Now taking complimentary events and using the fact that $\mb{P}(a+b>2) \leq \mb{P}(a>1) + \mb{P}(b>1)$ we obtain,
    \begin{multline}\label{kab5}
         \mb{P}\left(\mathcal{E}_n(u)\right) \geq 1-\mb{P}\underbrace{\left(\sup_{z,\Tilde{z}_n\in B(u,r_n)}\left|\frac{1}{n}\sum_2^n\left(\frac{1}{z-X_j}-\frac{1}{\Tilde{z}_n-X_j}\right)\right| \geq \frac{|f(u)|}{6}   \right)}_{(\mathrm{I})}\\
         -\mb{P}\underbrace{ \left(  {\left| \frac{1}{n}\sum_2^n\frac{1}{\Tilde{z}_n-X_j} -f(\Tilde{z}_n) \right|} \geq \frac{|f(u)|}{6} \right)}_{(\mathrm{II})}.
    \end{multline}
    To estimate ${(\mathrm{I})}$, we first simplify it using the following change of variables $z'=z-u, z_n^{''}=\Tilde{z}_n-u, X_j^{'}=X_j-u$ to get
    \begin{multline}\label{kab3}
         (\mathrm{I}) = \mb{P}\left(\sup_{z', z_n^{''}\in B(0,r_n)}\left|\frac{1}{n}\sum_2^n\left(\frac{(z'-z_n^{''})}{(z'-X_j^{'})(z_n^{''}-X_j^{'})}\right)\right|\geq \frac{|f(u)|}{6} \right) \\
         \leq \mb{P}\left(\sup_{z', z_n^{''}\in B(0,r_n)}\frac{2r_n}{n}\left|\sum_2^n\left(\frac{1}{(z'-X_j^{'})(z_n^{''}-X_j^{'})}\right)\right|\geq \frac{|f(u)|}{6} \right).
    \end{multline}
    Now using Markov inequality and Lemma 5.9 from \cite{Kabluchko2019} with $r_n=s_n=\frac{1}{n^{3/4}}$ and $a_n=\frac{2r_n}{n}$ in \eqref{kab3} we get, 
    \begin{align}\label{kab6}
        (\mathrm{I}) \leq \frac{6}{|f(u)|}\left[4na_n\left( -2\pi C\log(2s_n)+\mathcal{O}(1)\right)+4n\pi s_n^2C \right] \leq \frac{C}{|f(u)|\sqrt{n}}.
    \end{align}  
    We use the bound $(5.40)$ in Lemma 5.11 from \cite{Kabluchko2019} with $p=1.5, \epsilon =\frac{|f(u)|}{6}$ and uniform bounds on the moments from Lemma \eqref{moment bound for uniform on disk} to estimate $(\mathrm{II})$.
 \begin{align}\label{kab7}
     (\mathrm{II}) \leq \frac{C}{|f(u)|^{3/2}\sqrt{n}} \left( \mb{E}\left|\frac{1}{\Tilde{z}_n-X_1}\right|^{1.5}+|f(\Tilde{z}_n)|^{1.5}\right)\leq \frac{C}{|f(u)|^{3/2}\sqrt{n}}.
 \end{align}
 Now inserting \eqref{kab6}, and \eqref{kab7},  in \eqref{kab0} we obtain,
 \begin{align*}
     \mb{P}&\left(\big\{\exists \textit{ a unique critical point  } \xi \in B\left(X_1,r_n\right)\big\} \big| X_1 \in \mb{D}_n\right)\\
     &\geq \int_{\mb{D}_n} \left(1-\frac{C}{|f(u)|^{3/2}\sqrt{n}}-\frac{C}{|f(u)|\sqrt{n}}-\frac{C}{\sqrt{n}}\right)d\mu(u)\\
     &\geq 1-\frac{C}{\sqrt{n}}-C_1\int_{\frac{1}{n^{3/4}}}^{1-\frac{1}{\sqrt{n}}}\left(\frac{C}{r^{3/2}\sqrt{n}}-\frac{C}{r\sqrt{n}}\right)rdr\\
     &\geq 1-\frac{C}{\sqrt{n}}. \qedhere
 \end{align*}
\end{proof}
    Applying the union bound along with the estimates (\ref{b3}), (\ref{b5}), (\ref{b6}), and (\ref{kab}) leads to
        \begin{align}\label{b8}
            \mb{P}\left(T_1|X_1 \in \mb{D}_n\right) \geq \mb{P}\left( \cap_{k=1}^{n+2}H_k\big|X_1 \in \mb{D}_n\right) \geq 1-\sum_{k=1}^{n+2}  \mb{P}\left({H_k}^c \big|X_1 \in \mb{D}_n \right)  \geq 1-\frac{C}{\sqrt{n}}.
        \end{align}
    Feeding (\ref{b8}) into (\ref{b1}) the required upper bound is obtained.
       \begin{align*}
           \mb{E}[C(\Lambda_n)] &\leq n\left(1-\mb{P}\left(T_1|X_1 \in \mb{D}_n\right)\mb{P}(X_1 \in \mb{D}_n)\right)+1\\
            &\leq n\left(1- \left(1-\frac{C}{\sqrt{n}}\right)\left(1-\frac{2}{\sqrt{n}}\right) \right)+1 \\
            &\leq C_2{\sqrt{n}}.\qedhere
       \end{align*}
    \end{proof}   
\section{Proof of Theorem \ref{uniform on circle}} 
    In a recent paper \cite{KLM}, Krishnapur, Lundberg, and Ramachandran have shown that the polynomial lemniscate for roots chosen uniformly from the unit circle is a truly random quantity that converges in distribution to a sub-level set of a certain Gaussian function. Here, we show that the expected number of components for such lemniscates is asymptotically $\frac{n}{2}$. 

    \begin{proof}[\textbf{Proof of the theorem \ref{uniform on circle} (lower limit)}]
            The proof of the lower bound in this case follows the same strategy as in the previous theorem. The definition of an isolated component remains unchanged, and our focus lies on determining the number of such components. However, we cannot follow the proof verbatim because in this case, $\mb{E}\left[\frac{1}{|z-X_j|}\right]=\infty$. Therefore we condition on the following event to bypass this problem. Let us define the event $A:= \left \{ \underset{2\leq j \leq n}{\min}|X_1-X_j| >\frac{1}{n^3}\right \}$, then by Lemma \ref{distance between roots} with $t=\frac{1}{n^3}$ and $f(x)=2x$ the probability of the event $A$ is
            \begin{align}\label{c1}
                \mb{P}(A)& =\mathbb{P}\left(\underset{2\leq j \leq n}{\min}|X_1-X_j| >\frac{1}{n^3} \right)\geq 1-\frac{2}{n^2}.
            \end{align}
        For $1\leq i\leq n,$ let us define the events
        $S_i:=\{X_i$ \emph{forms an isolated component}$\}$. Then it immediately follows that
            \begin{align*}
                \mb{E}\left[C(\Lambda_n)\right]\geq \mb{E}\left[\sum_{i=1}^n \mathbbm{1}_{S_i}\right]\geq n\mb{E}\left[\mathbbm{1}_{S_i}\right]
                \geq n\mb{P}\left( S_1 \right)\geq n\mb{P}\left( S_1 \cap A \right)  \geq n\mb{P}\left( S_1|A \right)-\frac{2}{n}.
            \end{align*} 
        Next, we define events $F_1,...F_{n+1}$  as follows.
        \begin{align}\label{1}
           \begin{cases}
                &F_1:=\left \{ \big|{P}^{'}_n(X_1)\big| \geq e^{n^{1/2-\epsilon}} \right \}\\[1em]
            &F_k:=\left \{ \Big|\frac{P^{(k)}_n(X_1)\frac{r_n^k}{k!}}{{P}^{'}_n(X_1)\frac{r_n}{1!}}\Big| < \frac{1}{2n^2}  \right\} \textit{ , for } k =2,...,n.\\[1.5em]
            &F_{n+1}:=\left \{ \underset{2\leq j \leq n}{\min}|X_1-X_j| > \frac{1}{n^6}\right \}
           \end{cases}
        \end{align}
    \noindent From the calculations of  (\ref{sufficient condition for component}) and (\ref{calc for sufficient cond}) it follows that on the events (\ref{1}), we have an isolated component. Hence
    \begin{align}
           \mb{P}\left( S_1 | A \right) \geq \mb{P}\left( \cap_{j=1}^{n+1}F_j |A\right).
    \end{align} 
    As before we will calculate the conditional probability of the events $F_j$. Taking logarithms and using Berry-Esseen theorem (\ref{BE}) as in Lemma \ref{derivative L bound}, along with uniform bounds on the moments gives
            \begin{align*}
                \mathbb{P}\left(F_1 |A\right)=\mathbb{P}\left(|{P}^{'}_n(X_1)| \geq e^{n^{1/2-\epsilon}} \big| A\right)& \geq 
                 \mathbb{P}\left(|{P}^{'}_n(X_1)| \geq e^{n^{1/2-\epsilon}}\right ) - \frac{1}{n^2} \geq \frac{1}{2}-\frac{\Hat{C}}{n^{\epsilon}},
            \end{align*}
    where we used that $\mb{P}(A\cap B)=\mb{P}(A)-\mb(A\cap B^c)$. Notice that, on the event $A$, we have for $2\leq k\leq n,$
        \begin{align*}
            \left|\frac{P^{(k)}_n(X_1)\frac{r_n^k}{k!}}{{P}^{'}_n(X_1)\frac{r_n}{1!}}\right| 
            & \leq \frac{1}{n^{6(k-1)}k!}\sum_{i_1,...,i_{k-1}}\frac{1}{|X_1-X_{i_1}|...|X_1-X_{k-1}|} \\[.5em]
            & \leq   \frac{1}{n^{6(k-1)}k!}\sum_{i_1,...,i_{k-1}} n^{3(k-1)} \\[.5em]
            & \leq  \frac{k(n-1)(n-2)...(n-k+1)n^{3(k-1)}}{n^{6(k-1)}k!} \\
            &\leq \frac{1}{2n^2}. 
            \end{align*}
    Therefore, $\mathbb{P}(F_k \cap A)=\mathbb{P}( A)$ and as a result for $2 \leq k \leq n$, we have $\mathbb{P}(F_k|A)=1$. Since $  A \subset F_{n+1}$, we get the conditional probability $\mathbb{P}(F_{n+1}|A)=1$. Now using these bounds in (\ref{c1}) we obtain,
    \begin{align*}
         &\mb{E}\left[C(\Lambda_n)\right]\geq n\mb{P}\left( S_1|A \right)-\frac{2}{n}\geq\frac{n}{2}-{Cn^{1-\epsilon}} \\
        &\implies \underset{{n \to \infty}}{\liminf} \hspace{.1in}\frac{\mb{E}\left[C(\Lambda_n)\right]}{n} \geq \frac{1}{2}.
    \end{align*}

\textbf{(Upper limit)} The pairing of zeros and critical points does not occur in general if the law of the random variable does not have a density. Therefore when $\mu$ is the uniform probability measure on $\mb{S}^1$, we can not proceed by exploiting the pairing result. We prove the upper limit by showing the number of components having less than $n^\epsilon$ roots is approximately $\frac{n}{2}$. Let $C_k(\Lambda_n)$ denote the number of components containing exactly $k$ roots. Then it immediately follows that
\begin{align}
   &\sum_1^n C_k(\Lambda_n) =C(\Lambda_n),\label{e1}\\
   &\sum_1^n k C_k(\Lambda_n) =n.\label{e2}
\end{align}
For $i=1,...,n$, fix an $\epsilon>0$ small and  define the events $D_i:= \big\{$There are at least $n^{\epsilon/2}$ many roots inside the component containing the root $X_i \big\}.$ Now we claim that,
\begin{align}\label{e3}
C(\Lambda_n) \leq n-\sum_1^n \mathbbm{1}_{D_i} +\sum_{k\geq n^{\epsilon/2}} C_k(\Lambda_n).
\end{align}
 Substituting (\ref{e1}) and (\ref{e2}) in (\ref{e3}), we have to verify that
 \begin{align*}
     \sum_1^n \mathbbm{1}_{D_i} \leq \sum_{k< n^{\epsilon/2}}(k-1) C_k(\Lambda_n) +\sum_{k\geq n^{\epsilon/2}}k C_k(\Lambda_n)
 \end{align*}
 Since all the quantities are non-negative it is enough to show that
 \begin{align}\label{e6}
     \sum_1^n \mathbbm{1}_{D_i} \leq \sum_{k\geq n^{\epsilon/2}}k C_k(\Lambda_n).
 \end{align}
    Let $X_{i_{1}}$ be a root that has more than $ n^{\epsilon/2}$ roots in the component containing it. Assume that $X_{i_2},..., X_{i_m}$ are the other roots in this component say $C$. Then clearly,
    \begin{align}\label{e7}
        \sum_{k=1}^m\mathbbm{1}_{D_{i_k}} = m.
    \end{align}
    Now choose another root from $\{X_1,...,X_n\} \backslash  \{X_{i_2},..., X_{i_m}\}$ such that it has more than $n^{\epsilon/2}$ roots in the component containing it. Continuing this process and adding equations like (\ref{e7}) we get (\ref{e6}).
    Since the total number of roots is $n$, we can obtain a bound on the rightmost term of (\ref{e3}) in the following way.
    \begin{align}\label{e8}
        n &= \sum_{k< n^{\epsilon/2}}k C_k(\Lambda_n)+ \sum_{k\geq n^{\epsilon/2}}kC_k(\Lambda_n) 
        \geq  \sum_{k\geq n^{\epsilon/2}} kC_k(\Lambda_n) \nonumber \\[1em]
        \implies  n^{1-\epsilon/2} &\geq \sum_{k\geq n^{\epsilon/2}}C_k(\Lambda_n).
    \end{align}
    After putting the estimate (\ref{e6}) and taking expectation in both sides of (\ref{e3}) we arrive at,
    \begin{align}\label{e17}
        \mb{E}[C(\Lambda_n)] \leq n-n \mb{P}(D_1) + n^{1-\epsilon/2}.
    \end{align}
    To calculate the probability of the event $D_1$, let us first calculate the probability of having at least $ n^{\epsilon/2}$ roots in the ball $B(rX_1,\Tilde{r})$, where $r:=1-\frac{1}{n^{1-\epsilon}}$, $ \Tilde{r}:=\frac{2}{n^{1-\epsilon}}$. For $i=2,...,n$, define the events $\mathcal{T}_i :=\{X_j \in B(rX_1,\Tilde{r})\}$, then by the Paley-Zygmund inequality,
    \begin{align}
        \mb{P}\left(\sum_2^n\mathbbm{1}_{\mathcal{T}_i} \geq  n^{\epsilon/2} \right) \geq \left(1-\frac{ n^{\epsilon/2}}{\mb{E}\left[\sum_2^n\mathbbm{1}_{\mathcal{T}_i}\right]}\right)^2\frac{\mb{E}\left[\sum_2^n\mathbbm{1}_{\mathcal{T}_i}\right]^2}{\mb{E}\left[|\sum_2^n\mathbbm{1}_{\mathcal{T}_i}|^2\right]}.\label{e10}\\[-1em]\nonumber
    \end{align}
    Using the rotation invariance of the measure we get, 
    \begin{align*}
            \mb{P}\left(X_j \in B\left(rX_1,\Tilde{r}\right)\right)
        = \int_0^{2\pi} \mb{P}\left(X_j \in B(rX_1,\Tilde{r})\big| X_1=e^{i\phi}\right) \frac{d\phi}{2\pi}=\mb{P}\left(X_j \in B(re_1,\Tilde{r})\right), 
    \end{align*}
    where $e_1 = (1, 0).$ Let us assume that $B(re_1,\Tilde{r})$ intersects the unit circle at points $W_1, W_2$ and the angle $\angle OW_1W_2 = \theta$, where $O$ is the origin. Then it is easy to see that $ \mb{P}(X_j \in B(rX_1,\Tilde{r}))= \theta $ and $ \mb{P}(X_j, X_k \in B(rX_1,\Tilde{r}))= \theta^2 $, for all $j,k \neq 1$. Then
    \begin{align}               
    &\mb{E}\left[\sum_2^n\mathbbm{1}_{\mathcal{T}_i}\right]=(n-1)\theta, \label{e91} \\[.5em]
     &{\mb{E}\left[\left(\sum_2^n\mathbbm{1}_{\mathcal{T}_i}\right)^2\right]}=(n-1)\theta +(n-2)(n-1)\theta^2.  \label{e92}
    \end{align}
     Equation \eqref{e91} and  \eqref{e92} along with Bernoulli's inequality yields,
    \begin{align}
       &\frac{\mb{E}\left[\sum_2^n\mathbbm{1}_{\mathcal{T}_i}\right]^2}{\mb{E}\left[|\sum_2^n\mathbbm{1}_{\mathcal{T}_i}|^2\right]}= \frac{(n-1)^2\theta^2}{(n-1)\theta +(n-2)(n-1)\theta^2}\geq 1- \frac{1}{\theta(n-1)}\geq 1-\frac{C}{n^{\epsilon}},\label{e9}
    \end{align}
    where we got the last inequality using elementary geometry in the following way. For n large, $\sin \left( \frac{\theta}{4}\right) \geq \frac{C}{n^{1-\epsilon}}$, utilizing this, we bound $(n-1)\theta$ as
    $$(n-1)\theta \geq 4(n-1)\sin\left( \frac{\theta}{4}\right) \geq {C}{n^{\epsilon}}.$$
    Plugging the bound (\ref{e9}) in (\ref{e10}) we have,
    \begin{align}\label{e12}
        \mb{P}\left(\sum_2^n\mathbbm{1}_{\mathcal{T}_i} \geq  n^{\epsilon/2} \right) \geq \left(1-\frac{ n^{\epsilon/2}}{\mb{E}\left[\sum_2^n\mathbbm{1}_{\mathcal{T}_i}\right]}\right)^2\left(1- \frac{C}{n^{\epsilon}}\right)\geq \left(1- \frac{C}{n^{\epsilon/2}}\right) .
    \end{align}
    If  the ball $B(rX_1,\Tilde{r})$ is inside the lemniscate and there are at least $n^{\epsilon/2}$ roots inside the ball $B(rX_1,\Tilde{r})$, then the connected component containing $X_1$ must have atleast $n^{\epsilon/2}$ roots inside it. Now all we need is to estimate the probability that the ball $B(rX_1,\Tilde{r})$ is inside the lemniscate, which follows from the next lemma.                      
     \begin{Lemma}\label{Lemma ball in lemnicsate}
         Let $\{X_i\}_{i=1}^\infty$ be a sequence of i.i.d. random variables which are uniformly distributed on the unit circle. Fix $\epsilon\in (0,\frac{1}{4})$ and define $r:=1-\frac{1}{n^{1-\epsilon}}$, $ \Tilde{r}:=\frac{2}{n^{1-\epsilon}}$. Then there exists a constant $C>0$, such that,
         \begin{align}\label{ball in lemnicsate}
             \mb{P}\left( B(rX_1,\Tilde{r}) \subset \Lambda_n\right) \geq \frac{1}{2}-\frac{C}{n^{\epsilon}}
         \end{align}
    \end{Lemma}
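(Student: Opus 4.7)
The plan has three moving pieces: a reduction by maximum modulus to controlling $|P_n|$ on the boundary of the disk, a Berry--Esseen estimate at the center of that disk, and a Chebyshev bound for the oscillation of $\log|P_n|$ across the boundary.

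By rotation invariance of $\mu$, I condition on $X_1 = 1$; the event becomes the deterministic $\{B(r,\tilde r) \subset \Lambda_n\}$. By maximum modulus for the analytic function $P_n$, this is equivalent to $\sup_{z\in\partial B(r,\tilde r)}|P_n(z)| < 1$. Factoring $P_n(z) = (z-1)Q(z)$ with $Q(z) = \prod_{j=2}^n (z-X_j)$ and noting $|z-1| \leq (1-r)+\tilde r = 3/n^{1-\epsilon}$ on $\partial B(r,\tilde r)$, it is enough to establish
\[
\sup_{z\in\partial B(r,\tilde r)} \log|Q(z)| \,\leq\, (1-\epsilon)\log n - \log 3.
\]
Writing $z_0 = r$, I split the left side as $\log|Q(z_0)| + \sup_z[\log|Q(z)| - \log|Q(z_0)|]$.

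For the center, $\log|Q(z_0)| = \sum_{j=2}^n \log|z_0-X_j|$ is an i.i.d.\ sum with mean zero (since $|z_0|<1$ gives $U_\mu(z_0)=0$) and bounded second and third absolute moments by Lemma \ref{log moment bound for uniform on cicle}. Berry--Esseen (Theorem \ref{BE}) then yields, for any $t>0$,
\[
\mathbb{P}\bigl(\log|Q(z_0)| \leq -t\bigr) \,\geq\, \tfrac{1}{2} - \tfrac{Ct}{\sqrt{n}} - \tfrac{C}{\sqrt{n}}.
\]
For the oscillation, the triangle inequality $|z-X_j| \leq |z_0-X_j|+\tilde r$ gives the deterministic termwise bound
\[
\sup_{z\in\partial B(r,\tilde r)}\bigl[\log|Q(z)| - \log|Q(z_0)|\bigr] \,\leq\, M \,:=\, \sum_{j=2}^n \log\!\Bigl(1 + \tfrac{\tilde r}{|z_0-X_j|}\Bigr).
\]

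Writing $\delta = 1-r = 1/n^{1-\epsilon}$, so $\tilde r = 2\delta$, and using the Taylor approximation $|z_0-e^{i\theta}|^2 \approx \delta^2+\theta^2$ near $\theta=0$, a direct computation splitting the $\theta$-integral at $|z_0-e^{i\theta}| = \tilde r$ yields $\mathbb{E}[\log(1+\tilde r/|z_0-X|)] = O(\tilde r\,\log(1/\delta))$ and $\mathbb{E}[\log^2(1+\tilde r/|z_0-X|)] = O(\tilde r)$, hence $\mathbb{E}[M] = O(n^\epsilon\log n)$ and $\mathrm{Var}(M) = O(n^\epsilon)$. Chebyshev then gives $\mathbb{P}(M \geq 2\mathbb{E}[M]) = O(1/(n^\epsilon\log^2 n))$. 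Taking $t = 2\mathbb{E}[M]$, on the intersection of $\{\log|Q(z_0)| \leq -t\}$ and $\{M \leq t\}$ one has $\sup_z \log|Q(z)| \leq 0 < (1-\epsilon)\log n - \log 3$ for $n$ large, and the probability of this intersection is at least
\[
\tfrac{1}{2} - O\!\bigl(n^{\epsilon-1/2}\log n\bigr) - O(n^{-\epsilon}).
\]
The hypothesis $\epsilon < 1/4$ is exactly what makes $n^{\epsilon-1/2}\log n = o(n^{-\epsilon})$, giving the claimed bound $\geq \tfrac{1}{2} - C/n^\epsilon$.

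The main obstacle is the balance between the oscillation drift $\mathbb{E}[M] \sim n^\epsilon \log n$, which originates from the logarithmic divergence of $\mathbb{E}[1/|z_0-X|]$ as $z_0$ approaches $\mathbb{S}^1$, and the $\sqrt{n}$ fluctuation scale for $\log|Q(z_0)|$. The central limit theorem can absorb the drift only at a cost of order $n^\epsilon\log n/\sqrt{n}$ in probability, and matching this against the target rate $1/n^\epsilon$ is precisely the constraint $\epsilon < 1/4$.
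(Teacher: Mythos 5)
Your proof is correct, but it takes a genuinely different route from the paper's for the main technical step. Both arguments condition on the location of $X_1$, peel off the factor $(z-X_1)$ (which contributes $\log|z-X_1|\le \log 3-(1-\epsilon)\log n$ on the boundary circle), and extract the $\tfrac12$ main term from a Berry--Esseen estimate for the mean-zero sum $\log|Q(z_0)|=\sum_{j\ge2}\log|z_0-X_j|$ at the center $z_0=rX_1$. Where you diverge is in passing from the center to the whole boundary $\partial B(z_0,\tilde r)$: the paper Taylor-expands the reduced polynomial at $z_0$ and must control all $n-1$ derivative ratios $\bigl|\tilde P_n^{(k)}(z_0)/\tilde P_n(z_0)\bigr|$, which it does by an exact second-moment computation (the cross terms vanish because $\mathbb{E}[1/(z-X)]=0$ for $|z|<1$, and the diagonal terms are handled via $\mathbb{E}[|r-X|^{-2}]=(1-r^2)^{-1}$) followed by a union bound over $n$ Chebyshev events. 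You instead bound the oscillation of $\log|Q|$ over the circle termwise by the single random variable $M=\sum_{j\ge2}\log\bigl(1+\tilde r/|z_0-X_j|\bigr)$ and control $M$ by one first/second-moment computation plus Chebyshev. Your route is more elementary (no combinatorial expansion of $k$-th derivatives, no $n$-fold union bound), and your oscillation bound $M=O(n^{\epsilon}\log n)$ is far sharper than the paper's Taylor-tail estimate, which lets you take a much milder threshold $t$ at the center and makes the origin of the constraint $\epsilon<\tfrac14$ transparent. The only detail worth making explicit is the matching lower bound $\mathbb{E}[M]\gtrsim n^{\epsilon}\log n$, which your Chebyshev step $\mathrm{Var}(M)/\mathbb{E}[M]^2=O(n^{-\epsilon}\log^{-2}n)$ uses; it follows from the same split of the $\theta$-integral at $|z_0-e^{i\theta}|=\tilde r$ since $\log(1+x)\ge x/2$ there, so this is routine. (Also, the maximum-principle reduction is only needed in the sufficient direction, so the word ``equivalent'' is harmless.)
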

    
    \begin{proof}[\textbf{Proof of Lemma \ref{Lemma ball in lemnicsate}}]
    Define $\Tilde{Q}_n(z):=\frac{Q_n(z)}{(z-z_1)} $ and assume that for some $r_1,\Tilde{r_1}$ the following is satisfied.
    \begin{align}\label{l01}
        \begin{cases}
            &4\Tilde{r_1}<1,\\[1em]
            &|\Tilde{Q}_n(r_1z_1)| \leq \exp\left({-n^{\frac{1}{2}-\epsilon}}\right),\\[1em]
            &\left|\frac{\Tilde{Q}^{(k)}_n(r_1z_1)\Tilde{r_1}^k}{\Tilde{Q}_n(r_1z_1)}\right| \leq n\sqrt{(n-1)...(n-k)}\left( \frac{4}{n^{1-\epsilon}}\right)^{k/2}, \quad k\geq 1.
        \end{cases}
    \end{align}
    Then for $z \in \partial  B(r_1z_1,\Tilde{r})$ and n large, we have,
    \begin{align*}
        |Q_n(z)| &= |z-z_1||\Tilde{Q}_n(z)|\\[.75em]
        &\leq 2\Tilde{r} \left( |\Tilde{Q}_n(r_1z_1)|+\left|\Tilde{Q}^{'}_n(r_1z_1)\frac{\Tilde{r_1}}{1!}\right|+... +\left|\Tilde{Q}^{(k)}_n(r_1z_1)\frac{\Tilde{r_1}^k}{k!}\right|+...+\left|\Tilde{Q}^{(n-1)}_n(r_1z_1)\frac{\Tilde{r_1}^{(n-1)}}{(n-1)!}\right| \right)\\[1em]
        &\leq |\Tilde{Q}_n(r_1z_1)| \left( 1+ \sum_{k=1}^{n-1}\left|\frac{\Tilde{Q}^k_n(r_1z_1)}{\Tilde{Q}_n(r_1z_1)}\frac{\Tilde{r}^k}{k!}\right|\right)\\[1em]
        &\leq  \exp\left({-n^{\frac{1}{2}-\epsilon}}\right)\left( 1+ \sum_{k=1}^{n-1}\frac{n\sqrt{(n-1)...(n-k)}}{k!}\left( \frac{4}{n^{1-\epsilon}}\right)^{k/2}\right),
    \end{align*}
    where we got the last line using \eqref{l01}. Now taking $n$ commmon in the paranthesis above and using the Cauchy–Schwarz inequality one has,
    \begin{align*}
         &\leq n \exp\left({-n^{\frac{1}{2}-\epsilon}}\right)\left( 1+ \sum_{k=1}^{n-1}\frac{{(n-1)...(n-k)}}{k!}\left( \frac{1}{n^{1/2+\epsilon/2}}\right)^{k}\right)^{1/2}\left( 1+ \sum_{k=1}^{n-1}\frac{1}{k!}\left( \frac{4}{n^{1/2-3/2\epsilon}}\right)^{k}\right)^{1/2}\\[1em]
         &\leq n \exp\left({-n^{\frac{1}{2}-\epsilon}}\right)\left( 1+ \sum_{k=1}^{n-1}\binom{n-1}{k}\left( \frac{1}{n^{1/2+\epsilon/2}}\right)^{k}\right)^{1/2}\left( 1+ \sum_{k=1}^{\infty}\frac{1}{k!}\left( \frac{4}{n^{1/2-3/2\epsilon}}\right)^{k}\right)^{1/2}\\[1em]
         &\leq n \exp\left({-n^{\frac{1}{2}-\epsilon}}\right)\left( 1+ \left( \frac{1}{n^{1/2+\epsilon/2}}\right)\right)^\frac{(n-1)}{2} \exp\left( 2n^{-1/2+3/2\epsilon}\right)\\[1em]
         &\leq n \exp\left({-n^{\frac{1}{2}-\epsilon}}\right)\exp\left({n^{\frac{1}{2}-\epsilon/2}}\right)\exp\left( 2n^{-1/2+3/2\epsilon}\right) <1.
    \end{align*}
    This ensures that the disk $B(z_1,r)$ is inside the lemniscate. Now with $r:=1-\frac{1}{n^{1-\epsilon}}$, $ \Tilde{r}:=\frac{2}{n^{1-\epsilon}}$ and defining $\Tilde{P}_n$ similarly to $\Tilde{Q}_n$, we define the following events,
    \begin{align}
    \begin{cases}
        \mathcal{G}_1 &:= |\Tilde{P}_n(rX_1)| \leq \exp\left({-n^{\frac{1}{2}-\epsilon}}\right)\\[1em]
        \mathcal{G}_k &:= \left|\frac{\Tilde{P}^{(k)}_n(rX_1)\Tilde{r}^k}{\Tilde{P}_n(rX_1)}\right| \leq n\sqrt{(n-1)...(n-k)}\left( \frac{4}{n^{1-\epsilon}}\right)^{k/2} , \hspace{.25in}  \textit{for } k=2,...,n.
    \end{cases}
    \end{align}
    By the conditions in (\ref{l01}) it immediately follows that, 
    \begin{align}
         \mb{P}\left( B(rX_1,\Tilde{r}) \subset \Lambda_n\right) \geq \mb{P}\left( \cap_1^n \mathcal{G}_k\right).
    \end{align}
    Let us calculate the probabilities of the events $\mathcal{G}_1,...,\mathcal{G}_n$ individually. To estimate $\mb{P}\left(  \mathcal{G}_1 \right)$, we take logarithm, use the fact that the mean of this random variable is $0$, and apply the Berry-Esseen Theorem (\ref{BE}) as done in Lemma \ref{derivative L bound}. Then it follows that for some constant $C_1$, 
    \begin{align}
        \mb{P}\left(  \mathcal{G}_1 \right) \geq \frac{1}{2}-\frac{C_1}{n^{\epsilon}}.
    \end{align}
    For the events $\mathcal{G}_k,$ we use Chebyshev's inequality to obtain,
    \begin{align}\label{l02}
        &\mb{P} \left(\left|\frac{\Tilde{P}^{(k)}_n(rX_1)\Tilde{r}^k}{\Tilde{P}_n(rX_1)}\right|\geq n\sqrt{(n-1)...(n-k)}\left( \frac{4}{n^{1-\epsilon}}\right)^{k/2} \right) \nonumber \\
        & \hspace{1.75in}\leq \frac{1}{n^2{(n-1)...(n-k)}}\left( \frac{n^{1-\epsilon}}{4}\right)^{k}\Tilde{r}^{2k}\mb{E}\left[\left|\frac{\Tilde{P}^{(k)}_n(rX_1)}{\Tilde{P}_n(rX_1)}\right|^2\right].
    \end{align}
    We estimate $\mb{E}\left[\left|\frac{\Tilde{P}^{(k)}_n(rX_1)}{\Tilde{P}_n(rX_1)}\right|^2\right]$ using the following facts
    \begin{align}
        &\mb{E}\left[ \frac{1}{z-X_1}\right]=0, \hspace{.11in} \forall z \in \mb{D}, \label{mean 1}\\
        &\mb{E}\left[\frac{1}{|r-X_1|^2}\right]= \frac{1}{1-r^2}.\label{mean 2}
    \end{align}
    The identity \eqref{mean 1} follows from the Cauchy integral formula, and \eqref{mean 2} follows using standard integration  techniques.
    \begin{align*}
        &\mb{E}\left[\left|\frac{\Tilde{P}^{(k)}_n(rX_1)}{\Tilde{P}_n(rX_1)}\right|^2\right]
       =\mb{E}\left[\left|\sum_{2\leq i_1<i_2<...<i_k \leq n}\frac{1}{(rX_1-X_{i_1})...(rX_1-X_{i_k})}\right|^2\right]\nonumber\\
        =&\mb{E}\left[\sum_{2\leq i_1<...<i_k \leq n}\frac{1}{(rX_1-X_{i_1})...(rX_1-X_{i_k})}\sum_{2\leq j_1<...<j_k \leq n}\frac{1}{\overline{(rX_1-X_{j_1})}...\overline{(rX_1-X_{j_k})}}\right]\nonumber\\
        =&\frac{1}{2\pi}\bigintss_0^{2\pi}\mb{E}\left[\sum_{2\leq i_1<...<i_k \leq n}\frac{1}{(re^{i\theta}-X_{i_1})...(re^{i\theta}-X_{i_k})}\sum_{2\leq j_1<...<j_k \leq n}\frac{1}{\overline{(re^{i\theta}-X_{j_1})}...\overline{(re^{i\theta}-X_{j_k})}}\right] d\theta\nonumber\\
        =&\mb{E}\left[\sum_{2\leq i_1<...<i_k \leq n}\frac{1}{(r-X_{i_1})...(r-X_{i_k})}\sum_{2\leq j_1<...<j_k \leq n}\frac{1}{\overline{(r-X_{j_1})}...\overline{(r-X_{j_k})}}\right] \nonumber
    \end{align*}
    Notice that by the independence of the random variables, and identity \eqref{mean 1}, the cross terms will vanish. We estimate the remaining terms using \eqref{mean 2} in the following way.
    \begin{align}\label{l04}
        \mb{E}\left[\left|\frac{\Tilde{P}^{(k)}_n(rX_1)}{\Tilde{P}_n(rX_1)}\right|^2\right]=&\mb{E}\left[\sum_{2\leq i_1<...<i_k \leq n}\frac{1}{|r-X_{i_1}|^2...|r-X_{i_k}|^2}\right] 
        =(n-1)...(n-k)\mb{E}\left[\frac{1}{|r-X_1|^2}\right]^k \nonumber \\[.5em]
        \leq&(n-1)...(n-k) (1-r^2)^{-k}
        \leq 2^k(n-1)...(n-k) n^{k(1-\epsilon)}.
    \end{align}
   Now plugging the bound (\ref{l04}) in (\ref{l02}) and taking the complementary events we get,
   \begin{align}\label{l06}
       &\mb{P} \left(\left|\frac{\Tilde{P}^{(k)}_n(rX_1)\Tilde{r}^k}{\Tilde{P}_n(rX_1)}\right|\leq n\sqrt{(n-1)...(n-k)}\left( \frac{4}{n^{1-\epsilon}}\right)^{k/2} \right) \geq 1- \frac{1}{n^2}.
   \end{align}
    Making use of (\ref{l06}) and (\ref{l02}) in (\ref{l01}) we arrive at the required probability.
        \begin{align*}
            \mb{P}\left( B(rX_1,\Tilde{r}) \subset \Lambda_n\right) &\geq \mb{P}\left( \mathcal{G}_1\right) - \mb{P}\left( \mathcal{G}_1\cap \left(\cap_2^n \mathcal{G}_k\right)^c\right)\\
            &\geq \frac{1}{2}-\frac{C_1}{n^{\epsilon}}-\sum_2^n \frac{1}{n^2}\\
            &\geq \frac{1}{2}-\frac{C}{n^{\epsilon}}.\qedhere
        \end{align*} 
    \end{proof}
    Then using the bound \eqref{ball in lemnicsate} in Lemma \ref{Lemma ball in lemnicsate} and (\ref{e12}) we get the required probability.
    \begin{align}\label{e13}
        \mb{P}(D_1) &\geq \mb{P}\left( \left\{\sum_2^n\mathbbm{1}_{\mathcal{T}_i} \geq  n^{\epsilon/2} \right\} \bigcap \Big\{  B(rX_1,\Tilde{r}) \subset \Lambda_n \Big\}\right) \nonumber \\[1em]
        &\geq \frac{1}{2}-\frac{C}{n^{\epsilon}} - \frac{2C}{n^{\epsilon/2}} \geq \frac{1}{2}-\frac{C}{n^{\epsilon/2}}.
    \end{align}
    Now setting (\ref{e13}) in (\ref{e17}) and taking the limsup we get the asymptotic upper bound, i.e,
    \begin{align*}
         \underset{{n \to \infty}}{\limsup} \frac{\mb{E}[C(\Lambda_n)] }{n} \leq \underset{{n \to \infty}}{\limsup} \hspace{.07in}\frac{1}{n}\left[n-n\left(\frac{1}{2}-\frac{2C}{n^{\epsilon/2}}\right)  + n^{1-\epsilon/2} \right]
         \leq \frac{1}{2}.
    \end{align*}
\end{proof}
\subsection{Acknowledgement}The author expresses gratitude to his thesis advisor Dr. Koushik Ramachandran for suggesting the problem and for feedback on the article. The author deeply appreciates the support, encouragement, and numerous simulating conversations he received from his advisor throughout this project.

\bibliographystyle{siam}
\bibliography{references}

\end{document}